\newtheorem{theo}{Theorem}[section]
\newtheorem{lemm}[theo]{Lemma}
\newtheorem{cor}[theo]{Corollary}
\theoremstyle{definition}
\theoremstyle{remark}
\newtheorem{rem}[theo]{Remark}
\theoremstyle{definition}
\newcommand{\deldel}{\sqrt{-1}\partial \overline{\partial}}
\newcommand{\dbar}{\overline{\partial}}
\newcommand{\HH}{\mathcal{H}}
\newcommand{\A}{\alpha}
\newcommand{\R}{{\widetilde{\rho}}}
\newcommand{\C}[2]{{#1}_{i_{0}...i_{#2}}}
\newcommand{\CC}[2]{\{{#1}_{i_{0}...i_{#2}}\}}
\newcommand{\U}[1]{{U}_{i_{0}...i_{#1}}}
\newcommand{\V}[1]{{V}_{i_{0}...i_{#1}}}
\newcommand{\lla}[0]{{\langle\!\hspace{0.02cm} \!\langle}}
\newcommand{\rra}[0]{{\rangle\!\hspace{0.02cm}\!\rangle}}
\begin{document}

\title[A vanishing theorem of Koll\'ar-Ohsawa type]
{A vanishing theorem of Koll\'ar-Ohsawa type}

\author{Shin-ichi MATSUMURA}

\address{Mathematical Institute, Tohoku University, 
6-3, Aramaki Aza-Aoba, Aoba-ku, Sendai 980-8578, Japan.}

\email{{\tt mshinichi-math@tohoku.ac.jp, mshinichi0@gmail.com}}

\thanks{Classification AMS 2010:Primary 32L20, Secondary 14C30, 58A14. }

\keywords
{Decomposition theorems,
Vanishing theorems,  
Higher direct images,  
K\"ahler deformations, 
the Leray spectral sequence, 
the theory of harmonic integrals, 
$L^{2}$-methods for $\dbar$-equations. }

\maketitle

\begin{abstract}
For proper surjective holomorphic maps from K\"ahler manifolds to analytic spaces, 
we give a decomposition theorem for 
the cohomology groups of 
the canonical bundle twisted by Nakano semi-positive vector bundles 
by means of the higher direct image sheaves, 
by using the theory of harmonic integrals developed by Takegoshi. 
%It is a generalization of a result of Koll\'ar. 
As an application, 
we prove a vanishing theorem of Koll\'ar-Ohsawa type 
by combining the $L^2$-method for the $\dbar$-equation. 
\end{abstract}

\section{Introduction}
The study of the direct image sheaves of adjoint bundles  
is one of important subjects   
in algebraic geometry and the theory of several complex variables. 
The main purpose of this paper is 
to generalize results related to this subject  
in \cite{Kol86}, \cite{Kol86b}, \cite{Ohs84}, \cite{Tak95}. 
In this paper, 
for a proper surjective holomorphic map $f \colon X \to Y$ 
from a K\"ahler manifold $X$ to an analytic space $Y$, 
we consider   
the higher direct image sheaves $R^q f_{*}(K_{X}\otimes E)$  
of the canonical bundle $K_{X}$ twisted 
by a  vector bundle $E$  
and the Leray spectral sequence 
$$
H^{p}(Y, R^{q}f_{*}(K_{X}\otimes E) ) \Longrightarrow H^{p+q}(X, K_{X}\otimes E). 
$$
In his paper \cite{Tak95}, 
Takegoshi proved  
the degeneration of the Leray spectral sequence 
at $E_{2}$-term and an injectivity theorem 
for the case $p=0$ 
when $E$ admits a (hermitian) metric 
with semi-positive curvature in the sense of Nakano. 
(See \cite{EV92}, \cite{Kol86}, \cite{Kol86b}, \cite{Sko78} 
for related topics, 
and see \cite{Fuj15}, \cite{Mat15b} 
for recent developments.)  
In order to generalize these results,  
we study the Leray spectral sequence in detail,  
by using the theory of harmonic integrals developed by Takegoshi. 
As a result, 
we obtain a decomposition theorem and 
an injectivity theorem for the case $p>0$ 
(Theorem \ref{m1} and Corollary \ref{m1-co}).  
Theorem \ref{m1} is stronger than the degeneration at $E_{2}$-term,  
and Corollary \ref{m1-co} is a generalization of 
the corollary of the main result in  \cite{Kol86b}. 
Moreover, as an application, we prove a generalization of the vanishing theorem of 
Koll\'ar-Ohsawa type (Theorem \ref{m2}), 
which gives an affirmative answer for \cite[Conjecture 2.25]{Fuj15}. 
The following theorem can be seen as 
a weak form of the decomposition theorem.

\begin{theo}\label{m1}
Let $f \colon X \to Y$ be a proper surjective holomorphic map  
from a K\"ahler manifold $X$ to an analytic space $Y$, 
and $E$ be a vector bundle on $X$ admitting a $($hermitian$)$ metric 
with semi-positive curvature in the sense of Nakano. 
Then, for integers $p, q \geq 0$, there exists a natural homomorphism
\begin{equation*}
\varphi_{p,q} \colon H^{p}(Y, R^{q}f_{*}(K_{X}\otimes E)) 
\to H^{p+q}(X, K_{X}\otimes E)
\end{equation*}
with the following properties\,$:$ 
\begin{enumerate}
\item[$\bullet$] The homomorphism $\varphi_{p,q}$ is injective. 
\item[$\bullet$] ${\rm{Im}}\, \varphi_{p, q} \cap {\rm{Im}}\, \varphi_{p',q'} = \{0\}$ 
when $p \not = p'$ and $p + q=p' + q'$. 
\end{enumerate}
Here $K_{X}$ denotes the canonical bundle of $X$, 
$R^q f_{*}(\bullet)$ denotes 
the $q$-th higher direct image sheaf, 
and ${\rm{Im}}\, \varphi_{p,q}$ denotes the image of $\varphi_{p,q}$.  
\end{theo}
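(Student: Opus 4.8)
The plan is to realize all the maps $\varphi_{p,q}$ at once through a single morphism of complexes of sheaves on $Y$, and then to read off both properties from a comparison of hypercohomology spectral sequences. I would set up two complexes. Let $\mathcal{A}^{n,\bullet}(E)$ be the fine resolution of $K_{X}\otimes E$ by sheaves of smooth $E$-valued $(n,\bullet)$-forms on $X$ with differential $\dbar$. Since $f$ is proper and each $\mathcal{A}^{n,q}(E)$ is soft, $f_{*}\mathcal{A}^{n,q}(E)$ is soft, hence $\Gamma(Y,-)$-acyclic; so the hypercohomology of $(f_{*}\mathcal{A}^{n,\bullet}(E),\dbar)$ on $Y$ is computed by $\Gamma(Y,f_{*}\mathcal{A}^{n,\bullet}(E))=\Gamma(X,\mathcal{A}^{n,\bullet}(E))$ and equals $H^{\bullet}(X,K_{X}\otimes E)$, while its hypercohomology spectral sequence, with $E_{2}$-page $H^{p}(Y,\mathcal{H}^{q}(f_{*}\mathcal{A}^{n,\bullet}(E)))=H^{p}(Y,R^{q}f_{*}(K_{X}\otimes E))$, is exactly the Leray spectral sequence. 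On the other hand I would invoke Takegoshi's harmonic theory: using the K\"ahler assumption and the Nakano semi-positivity of $E$, there is a subsheaf $\mathcal{H}^{q}\subset f_{*}\mathcal{A}^{n,q}(E)$ of fibrewise harmonic forms together with a canonical isomorphism $\mathcal{H}^{q}\cong R^{q}f_{*}(K_{X}\otimes E)$, and---the decisive point---every section of $\mathcal{H}^{q}$ is $\dbar$-closed as a form on $X$, not merely along the fibres. Hence $(\mathcal{H}^{\bullet},0)$ is a complex with zero differential and the inclusion $j\colon(\mathcal{H}^{\bullet},0)\hookrightarrow(f_{*}\mathcal{A}^{n,\bullet}(E),\dbar)$ is a morphism of complexes.

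Next I would pass to hypercohomology. Because $\mathcal{H}^{\bullet}$ carries the zero differential, $\mathbb{H}^{m}(Y,\mathcal{H}^{\bullet})=\bigoplus_{p+q=m}H^{p}(Y,\mathcal{H}^{q})$ canonically, and the hypercohomology spectral sequence of $\mathcal{H}^{\bullet}$ degenerates at $E_{2}$ with $E_{2}^{p,q}=H^{p}(Y,\mathcal{H}^{q})$. On $E_{2}$-pages, $j$ induces the map $H^{p}(Y,\mathcal{H}^{q})\to H^{p}(Y,R^{q}f_{*}(K_{X}\otimes E))$ coming from $\mathcal{H}^{q}\cong R^{q}f_{*}(K_{X}\otimes E)$, hence an isomorphism. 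By the comparison theorem for these (essentially first-quadrant, hence convergent) spectral sequences, $j$ induces an isomorphism on $E_{\infty}$-pages and a filtered isomorphism on abutments
\begin{equation*}
j_{*}\colon\ \bigoplus_{p+q=m}H^{p}(Y,\mathcal{H}^{q})\ =\ \mathbb{H}^{m}(Y,\mathcal{H}^{\bullet})\ \xrightarrow{\ \sim\ }\ H^{m}(X,K_{X}\otimes E).
\end{equation*}
I then define $\varphi_{p,q}$ as the composition of the isomorphism $H^{p}(Y,R^{q}f_{*}(K_{X}\otimes E))\cong H^{p}(Y,\mathcal{H}^{q})$, the inclusion of this direct summand into $\mathbb{H}^{p+q}(Y,\mathcal{H}^{\bullet})$, and $j_{*}$; in the extreme cases $p=0$ and $q=0$ this recovers the edge homomorphisms of the Leray spectral sequence, and in general it is the splitting of the Leray filtration furnished by harmonic theory.

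The two properties are then formal. The map $\varphi_{p,q}$ is the composite of an injection (inclusion of a direct summand) with an isomorphism, hence injective. For $(p,q)\neq(p',q')$ with $p+q=p'+q'=m$, the summands $H^{p}(Y,\mathcal{H}^{q})$ and $H^{p'}(Y,\mathcal{H}^{q'})$ meet only in $\{0\}$ inside $\mathbb{H}^{m}(Y,\mathcal{H}^{\bullet})$, so their images under the injective map $j_{*}$, namely ${\rm{Im}}\,\varphi_{p,q}$ and ${\rm{Im}}\,\varphi_{p',q'}$, meet only in $\{0\}$; in particular this holds whenever $p\neq p'$ and $p+q=p'+q'$, since then automatically $q\neq q'$. (As a byproduct one gets the full direct-sum decomposition of $H^{m}(X,K_{X}\otimes E)$ and, in particular, a reproof of the $E_{2}$-degeneration of the Leray spectral sequence.)

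The expected main obstacle is not this formal packaging but the input from Takegoshi's theory on which it rests: constructing the sheaf $\mathcal{H}^{q}$ of relative harmonic forms, identifying it canonically with $R^{q}f_{*}(K_{X}\otimes E)$, and, above all, proving that these fibrewise-harmonic forms are globally $\dbar$-closed on $X$. This last assertion is where the K\"ahler hypothesis, the Bochner--Kodaira--Nakano identity, the Nakano semi-positivity of $E$, and---when $X$ is noncompact---a careful choice of complete K\"ahler metric and exhaustion function enter, and it is the technical heart of the matter; by contrast the softness and convergence bookkeeping over a possibly singular analytic space $Y$ is routine, since the differential forms in question live on the smooth manifold $X$.
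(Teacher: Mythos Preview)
Your argument is correct and is genuinely different in packaging from the paper's proof. The paper works entirely at the level of \v{C}ech cochains: given a $p$-cocycle $\{\alpha_{i_0\dots i_p}\}$ with values in $H^q(f^{-1}(U_{i_0\dots i_p}),K_X\otimes E)$, it replaces each $\alpha$ by its Takegoshi harmonic representative, then runs an explicit zig-zag (partition of unity, $\dbar$, repeat) to produce a global $\dbar$-closed $(n,p+q)$-form; injectivity and independence of images are then checked by reversing this zig-zag step by step. Your approach encodes the same harmonic input as a morphism of complexes $(\mathcal{H}^\bullet,0)\hookrightarrow (f_*\mathcal{A}^{n,\bullet}(E),\dbar)$ that is a quasi-isomorphism, and reads off everything from the comparison of hypercohomology spectral sequences. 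Both arguments rest on the identical analytic core (Takegoshi's Theorem~\ref{Tak}: the harmonic spaces represent $R^qf_*(K_X\otimes E)$ over Stein opens, restrict correctly, and consist of globally $\dbar$-closed forms).

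What each approach buys: your route is shorter and more conceptual, and in fact yields the full direct-sum decomposition $H^m(X,K_X\otimes E)=\bigoplus_{p+q=m}\mathrm{Im}\,\varphi_{p,q}$ without the compactness hypothesis the paper imposes in Corollary~\ref{m1-co} (the paper uses a dimension count there, which is unnecessary once one has a quasi-isomorphism). This is essentially the derived-category decomposition of \cite{Fs15}, which the paper cites. The paper's explicit construction, on the other hand, is not merely a matter of taste: the concrete formula $b_{i_0\dots i_{p-k}}=\sum_j \widetilde{\rho}_j\,\dbar b_{j i_0\dots i_{p-k}}$ is exactly what is fed into the $L^2$-estimate in the proof of Theorem~\ref{m2}, where one must control $\langle\!\langle B_\delta^{-1}\{\dbar b_{i_0}\},\{\dbar b_{i_0}\}\rangle\!\rangle$ via Lemma~\ref{comp}. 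Your abstract $\varphi_{p,q}$ would not directly supply that estimate; so if you intend to continue to the vanishing theorem, you would still need to unwind your construction into the paper's explicit one (or equivalently, realize the hypercohomology map via the \v{C}ech--Dolbeault double complex).
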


When $X$ is a compact K\"ahler manifold, 
we obtain a  decomposition theorem 
for $H^{\ell}(X, K_{X}\otimes E)$
(the first conclusion of Corollary \ref{m1-co}). 
Moreover, we also obtain an injectivity theorem for the multiplication map induced 
by (holomorphic) sections of semi-positive line bundles 
(the latter conclusion of Corollary \ref{m1-co}).

\begin{cor}\label{m1-co}
Under the same situation as in Theorem \ref{m1}, 
we further assume that $X$ is a compact K\"ahler manifold. 
Then we have the following direct sum decomposition\,$:$ 
\begin{equation*}
H^{\ell}(X, K_{X}\otimes E)=\bigoplus_{p+q=\ell}\, {\rm{Im}}\, \varphi_{p,q}. 
\end{equation*}
Moreover, for a line bundle $F$ on $X$ admitting a metric with semi-positive curvature 
and a $($non-zero$)$ section $s$ of $F^m$ $(m > 0)$,  
the multiplication map induced by the tensor product with $s$ 
$$
\Phi_{s}\colon H^{p}(Y, R^q f_{*}(K_{X}\otimes E\otimes F)) 
\xrightarrow{\otimes s} 
H^{p}(Y, R^q f_{*}(K_{X}\otimes E\otimes F^{m+1})) 
$$
is injective for any $p, q \geq 0$.
\end{cor}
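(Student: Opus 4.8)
The plan is to deduce both conclusions of Corollary~\ref{m1-co} from Theorem~\ref{m1}, combined with (a) finite-dimensionality coming from the compactness of $X$, (b) degeneration of the Leray spectral sequence at $E_2$, and (c) a classical Enoki--Takegoshi-type injectivity theorem on $X$.

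\emph{Decomposition.} Since $f$ is proper and surjective and $X$ is compact, $Y=f(X)$ is compact; by Grauert's coherence theorem each $R^{q}f_{*}(K_{X}\otimes E)$ is coherent on $Y$, so every $H^{p}(Y,R^{q}f_{*}(K_{X}\otimes E))$ is finite-dimensional. I would then recall, from the construction of $\varphi_{p,q}$ in the proof of Theorem~\ref{m1} (via Takegoshi's harmonic integrals together with the Leray spectral sequence), that $\mathrm{Im}\,\varphi_{p,q}$ lies in the $p$-th step $F^{p}H^{p+q}(X,K_{X}\otimes E)$ of the Leray filtration and maps isomorphically onto the graded piece $\mathrm{Gr}_{F}^{p}H^{p+q}(X,K_{X}\otimes E)$; by Takegoshi's $E_{2}$-degeneration (\cite{Tak95}) the latter is identified with $E_{\infty}^{p,q}=H^{p}(Y,R^{q}f_{*}(K_{X}\otimes E))$, which is consistent with the injectivity of $\varphi_{p,q}$ and the dimension count. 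Granting this, fix $\ell$ and argue by downward induction on $p$: from the short exact sequences $0\to F^{p+1}H^{\ell}\to F^{p}H^{\ell}\to\mathrm{Gr}_{F}^{p}H^{\ell}\to 0$ and the fact that $\mathrm{Im}\,\varphi_{p,\ell-p}\subseteq F^{p}H^{\ell}$ is a complement to $F^{p+1}H^{\ell}$, one obtains $F^{p}H^{\ell}=\bigoplus_{p'\ge p}\mathrm{Im}\,\varphi_{p',\ell-p'}$; taking $p=0$ yields $H^{\ell}(X,K_{X}\otimes E)=\bigoplus_{p+q=\ell}\mathrm{Im}\,\varphi_{p,q}$. (Equivalently, the decomposition can be read off directly from the harmonic decomposition of $H^{\ell}(X,K_{X}\otimes E)$ used to build the $\varphi_{p,q}$. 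Note that the pairwise-triviality property in Theorem~\ref{m1} plus the dimension count alone does \emph{not} force a direct-sum decomposition, so the filtration compatibility is the essential input here.)

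\emph{Injectivity of $\Phi_{s}$.} First, $E\otimes F$ and $E\otimes F^{m+1}$ again carry metrics with Nakano semi-positive curvature: the curvature of $E\otimes F^{k}$ is $i\Theta_{E}\otimes\mathrm{Id}_{F^{k}}+\mathrm{Id}_{E}\otimes i\Theta_{F^{k}}$ with $i\Theta_{F^{k}}=k\,i\Theta_{F}\ge 0$, so Nakano semi-positivity of $E$ is inherited. Hence Theorem~\ref{m1} supplies injective homomorphisms $\varphi_{p,q}\colon H^{p}(Y,R^{q}f_{*}(K_{X}\otimes E\otimes F))\to H^{p+q}(X,K_{X}\otimes E\otimes F)$ and $\varphi'_{p,q}\colon H^{p}(Y,R^{q}f_{*}(K_{X}\otimes E\otimes F^{m+1}))\to H^{p+q}(X,K_{X}\otimes E\otimes F^{m+1})$. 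The nonzero section $s\in H^{0}(X,F^{m})$ induces an injective morphism of sheaves $K_{X}\otimes E\otimes F\to K_{X}\otimes E\otimes F^{m+1}$, hence the map $\Phi_{s}$ on $H^{p}(Y,R^{q}f_{*}(\,\cdot\,))$ and the multiplication map $\psi_{s}\colon H^{p+q}(X,K_{X}\otimes E\otimes F)\to H^{p+q}(X,K_{X}\otimes E\otimes F^{m+1})$. By the naturality of $\varphi_{p,q}$ with respect to this morphism of coefficient sheaves (Theorem~\ref{m1}), the square with horizontal arrows $\Phi_{s}$ (top), $\psi_{s}$ (bottom) and vertical arrows $\varphi_{p,q},\varphi'_{p,q}$ commutes, i.e. $\varphi'_{p,q}\circ\Phi_{s}=\psi_{s}\circ\varphi_{p,q}$.

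\emph{Conclusion and main obstacle.} It remains to check that $\psi_{s}$ is injective, which is exactly a Koll\'ar--Ohsawa / Enoki--Takegoshi injectivity theorem on the compact K\"ahler manifold $X$ for the Nakano semi-positive bundle $E\otimes F$ and the semi-positive line bundle $F$: for $s\in H^{0}(X,F^{m})\setminus\{0\}$ the map $\otimes s\colon H^{j}(X,K_{X}\otimes E\otimes F)\to H^{j}(X,K_{X}\otimes E\otimes F^{m+1})$ is injective for every $j\ge 0$ (Takegoshi \cite{Tak95}; see also \cite{Ohs84}). Taking $j=p+q$, $\psi_{s}$ is injective, and since $\varphi_{p,q}$ is injective by Theorem~\ref{m1}, the composite $\psi_{s}\circ\varphi_{p,q}=\varphi'_{p,q}\circ\Phi_{s}$ is injective; therefore $\Phi_{s}$ is injective. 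The two non-formal points are: the compatibility of $\mathrm{Im}\,\varphi_{p,q}$ with the Leray filtration, which must be extracted from the construction of $\varphi_{p,q}$ rather than from the bare statement of Theorem~\ref{m1}; and — the part I expect to be the real obstacle — the naturality of that construction with respect to the morphism induced by $s$, in spite of the fact that $E\otimes F$ and $E\otimes F^{m+1}$ are endowed with different Hermitian metrics. Resolving the latter should amount to checking that the harmonic-projection description of $\varphi_{p,q}$ is functorial for such sheaf morphisms, or else to a limiting argument in the metrics.
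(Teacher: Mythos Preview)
Your proposal is correct and follows essentially the same route as the paper. For the decomposition, the paper argues more tersely: from Theorem~\ref{m1} it writes $H^{\ell}(X,K_{X}\otimes E)\supset\bigoplus_{p+q=\ell}\mathrm{Im}\,\varphi_{p,q}$ and then invokes Takegoshi's $E_{2}$-degeneration to match dimensions. Your observation that pairwise trivial intersection plus a dimension count does not by itself force a direct sum is well taken; the paper is tacitly using a little more than the bare statement of Theorem~\ref{m1} (either that the argument in subsection~\ref{3-4} extends verbatim from two summands to an arbitrary sum, or equivalently the filtration compatibility you spell out --- the construction in subsection~\ref{3-2} is precisely the \v Cech-to-Dolbeault comparison, so $\mathrm{Im}\,\varphi_{p,q}\subset F^{p}H^{p+q}$ is immediate). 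Your filtration argument is a clean way to make this explicit.

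For the injectivity of $\Phi_{s}$, the paper's argument is exactly the commutative square you wrote, with $\Psi_{s}$ injective by the classical Enoki--Takegoshi theorem and $\varphi_{p,q}$ injective by Theorem~\ref{m1}. The point you correctly flag as the real obstacle --- commutativity of the square despite the different metrics on $E\otimes F$ and $E\otimes F^{m+1}$ --- is not a formal naturality statement contained in Theorem~\ref{m1}; the paper resolves it by noting that $s\cdot\mathcal{H}(\alpha_{i_{0}\ldots i_{p}})$ is again harmonic by \cite[Proposition~4.4]{Tak95}, hence equals $\mathcal{H}(s\cdot\alpha_{i_{0}\ldots i_{p}})$. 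This forces $\{s\, b_{i_{0}\ldots i_{p-k}}\}$ to satisfy $(*)$ for $\{s\,\alpha_{i_{0}\ldots i_{p}}\}$, and the square commutes on the nose. So your anticipated resolution (``the harmonic-projection description of $\varphi_{p,q}$ is functorial for such sheaf morphisms'') is exactly right, and the needed input is that multiplication by a holomorphic section of a semi-positive line bundle preserves Takegoshi-harmonicity.
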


Note that the injectivity theorem for the case $p=0$ follows from 
\cite[Injectivity Theorem I\hspace{-.1em}I\hspace{-.1em}I]{Tak95}. 
In \cite{Fs15}, the above decomposition was  
generalized to a decomposition theorem in the derived category. 
By applying the proof of Theorem \ref{m1} (in particular, the construction of $\varphi_{p,q}$)  
and the $L^{2}$-method for the $\dbar$-equation, 
we can prove a vanishing theorem 
for the higher cohomology groups of  $R^q f_{*}(K_{X}\otimes E)$ 
if $E$ admits a metric $h$ satisfying 
$\sqrt{-1}\Theta_{h}(E) \geq f^{*}\sigma \otimes {\rm{id}}_{E}$  
for some K\"ahler form $\sigma$ on $Y$ in the sense of Nakano 
(see Section \ref{4} for the definition of K\"ahler forms on analytic spaces). 
When $X$ is compact and $q=0$, 
Ohsawa first proved such a vanishing theorem in \cite{Ohs84}, 
by giving the elegant method to solve the $\dbar$-equation on complete K\"ahler manifolds 
and inductively constructing solutions of the $\dbar$-equation. 
(See Remark \ref{compare} for a comparison between our proof and Ohsawa's proof.) 
In \cite{Kol86}, 
Koll\'ar proved the same conclusion 
when $X$ is a smooth projective variety and $E$ is the pull-back 
of an ample line bundle.
Therefore the following theorem can be seen as a generalization 
of Ohsawa's result to higher direct images 
and of Koll\'ar's result to the complex analytic setting.

\begin{theo}\label{m2}
Let $f \colon X \to Y$ be a proper surjective holomorphic map 
from a weakly pseudoconvex  K\"ahler manifold $X$ to an analytic space $Y$, and $E$ be a vector bundle on $X$.  
If $E$ admits a metric whose curvature is larger than or equal to 
the pull-back of a K\"ahler form on $Y$ in the sense of Nakano, 
then we have 
$$
H^{p}(Y, R^q f_{*}(K_{X}\otimes E)) =0 
\quad \text{for any } p>0 \text{ and } q\geq0. 
$$ 
In particular, we have the natural isomorphism 
$H^{0}(Y, R^q f_{*}(K_{X}\otimes E))\cong H^{q}(X, K_{X}\otimes E)$. 
\end{theo}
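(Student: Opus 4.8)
The plan is to derive the vanishing from the injectivity of the homomorphism $\varphi_{p,q}$ provided by Theorem \ref{m1}, by showing that its image vanishes as soon as $p>0$. First note that the curvature hypothesis $\sqrt{-1}\Theta_{h}(E)\ge f^{*}\sigma\otimes\mathrm{id}_{E}\ge 0$ makes $E$ Nakano semi-positive, so Theorem \ref{m1} does apply and yields, for all $p,q\ge0$, an injective natural homomorphism
\[
\varphi_{p,q}\colon H^{p}(Y,R^{q}f_{*}(K_{X}\otimes E))\hookrightarrow H^{p+q}(X,K_{X}\otimes E)=H^{n,p+q}_{\dbar}(X,E),\qquad n:=\dim X .
\]
Hence it suffices to prove $\varphi_{p,q}\equiv0$ for $p>0$; the remaining isomorphism for $p=0$ then follows from the Leray spectral sequence, which under this vanishing degenerates onto the column $p=0$ and whose edge homomorphism is $\varphi_{0,q}$. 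To show that $\varphi_{p,q}(\xi)=0$ for a given class $\xi$, I would use the explicit representative furnished by the construction in the proof of Theorem \ref{m1}: removing from $Y$ a nowhere dense analytic subset $Z$ off which $f$ is a smooth submersion and setting $X_{0}:=X\setminus f^{-1}(Z)$, the class $\varphi_{p,q}(\xi)$ is represented by a smooth $\dbar$-closed $E$-valued $(n,p+q)$-form $u$ which on $X_{0}$ is built from fibrewise $E$-harmonic forms of fibre antiholomorphic degree $q$ glued along a \v{C}ech $p$-cocycle on $Y$. In particular $u$ has full horizontal holomorphic degree $r:=\dim Y$, antiholomorphic degree exactly $p$ in the directions pulled back from the base, and is ``vertical'' in the remaining slots. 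It therefore suffices to solve $\dbar v=u$ on $X$ by the $L^{2}$-method, for then $\varphi_{p,q}(\xi)=[u]=0$ and $\xi=0$ by injectivity.

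For the $L^{2}$-estimate I would proceed as follows. Using that $X$ is weakly pseudoconvex and Kähler, fix a smooth plurisubharmonic exhaustion $\psi$ of $X$, a complete Kähler form $\omega$ on $X_{0}$ adapted to $f$ in Takegoshi's sense (with respect to which the harmonic representatives above are taken), and replace $h$ by $h\,e^{-\chi(\psi)}$ for a sufficiently fast growing convex increasing $\chi$; this does not change the Dolbeault class $[u]$, it only places $u$ in the weighted space $L^{2}_{n,p+q}(X_{0},E;\omega,h\,e^{-\chi(\psi)})$, and it keeps
\[
\sqrt{-1}\Theta_{h\,e^{-\chi(\psi)}}(E)=\sqrt{-1}\Theta_{h}(E)+\deldel\chi(\psi)\otimes\mathrm{id}_{E}\ \ge\ f^{*}\sigma\otimes\mathrm{id}_{E}.
\]
On the complete Kähler manifold $(X_{0},\omega)$ the Bochner--Kodaira--Nakano inequality then controls $\dbar$-closed $(n,p+q)$-forms by the curvature operator $[\sqrt{-1}\Theta_{h\,e^{-\chi(\psi)}}(E),\Lambda_{\omega}]\ge[f^{*}\sigma,\Lambda_{\omega}]\ge0$. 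Although $f^{*}\sigma$ degenerates along the fibres of $f$, the operator $[f^{*}\sigma,\Lambda_{\omega}]$ is \emph{strictly} positive precisely on forms of the bidegree type of $u$: by the horizontal/vertical structure of $u$ noted above and the Hodge identity $[L_{\sigma},\Lambda_{\sigma}]=p\cdot\mathrm{id}$ on $(r,p)$-forms over the $r$-dimensional base, one gets a pointwise bound
\[
\bigl\langle\,[\,f^{*}\sigma,\Lambda_{\omega}]\,u,u\,\bigr\rangle\ \ge\ c\,\lvert u\rvert^{2}\qquad\text{on }X_{0}
\]
with $c=c(p)>0$. Feeding this into the standard $L^{2}$-existence theorem for $\dbar$ on complete Kähler manifolds produces $v\in L^{2}_{n,p+q-1}(X_{0},E)$ with $\dbar v=u$ and $\lVert v\rVert^{2}\le c^{-1}\lVert u\rVert^{2}<\infty$; after making $v$ smooth on $X_{0}$ by regularity and extending it as an $L^{2}$ $\dbar$-primitive across the analytic set $f^{-1}(Z)$, one obtains a smooth $\dbar$-primitive of $u$ on all of $X$, so $\varphi_{p,q}(\xi)=0$.

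The main obstacle is the pointwise curvature bound $\langle[f^{*}\sigma,\Lambda_{\omega}]u,u\rangle\ge c\lvert u\rvert^{2}$. The hypothesis only gives semi-positivity of $f^{*}\sigma$, which vanishes identically along the fibres, so the positivity needed to invert the curvature operator must be extracted entirely from the special horizontal/vertical bidegree structure of $u$ — a structure that is itself a byproduct of the construction of $\varphi_{p,q}$ via Takegoshi's theory of harmonic integrals — and it must be made to hold, together with the finiteness of $\lVert u\rVert$, uniformly on the non-compact manifold $X_{0}$. This is exactly where the adapted complete Kähler metric $\omega$ and the exhaustion weight $\chi(\psi)$ are needed and where the comparison with Takegoshi's machinery does the real work; the extension of the $L^{2}$-solution across $f^{-1}(Z)$ is a secondary technical point, and the passage between $L^{2}$ and smooth Dolbeault representatives is routine.
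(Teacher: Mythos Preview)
Your overall strategy matches the paper: use the injectivity of $\varphi_{p,q}$ from Theorem~\ref{m1}, represent $\varphi_{p,q}(\xi)$ by an explicit $\dbar$-closed form $u$ coming from the \v{C}ech--harmonic construction, and solve $\dbar v=u$ by the $L^{2}$-method after twisting $h$ by $e^{-\chi(\psi)}$. The difficulty you yourself flag, however, is real and is not overcome by your argument.

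The representative $u=\{\dbar b_{i_{0}}\}$ does have a useful structure, but it is not the one you describe. Unwinding the construction with a partition of unity $\{\rho_{j}\}$ (the paper's Lemma~\ref{par}) gives locally
\[
\dbar b_{i_{0}}=\sum_{j}\dbar f^{*}\rho_{j}\wedge \dbar b_{ji_{0}},
\]
so $u$ is a sum of terms of the form $\dbar f^{*}\rho\wedge\varphi$ with $\varphi$ an $E$-valued $(n,p+q-1)$-form. The harmonic pieces $\mathcal{H}(\alpha_{\cdots})$ sitting inside $\varphi$ are harmonic on the total space $f^{-1}(U)$ in Takegoshi's sense, not fibrewise harmonic; they have no reason to be ``vertical'', and $u$ does not have a clean horizontal/vertical bidegree splitting. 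More importantly, even granting that each nonzero component of $u$ contains at least one base-pulled-back $d\bar z$, the pointwise bound $\langle[f^{*}\sigma,\Lambda_{\omega}]u,u\rangle\ge c\,|u|^{2}$ with a uniform $c>0$ fails: the eigenvalues $\lambda_{k}$ of $f^{*}\sigma$ with respect to $\omega$ are not bounded away from zero (they can be arbitrarily small on a non-compact $X$, and they vanish along the fibre directions), so the operator $f^{*}\sigma\,\Lambda_{\omega}$ is not uniformly positive on the span of such $u$. Your appeal to the base identity $[L_{\sigma},\Lambda_{\sigma}]=p\cdot\mathrm{id}$ does not transfer, because the adjoint $\Lambda_{\omega}$ is taken with respect to $\omega$ on $X$, not with respect to $\sigma$ on the base.

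The paper bypasses this by the ``dual'' estimate. Instead of bounding $B_{0}:=f^{*}\sigma\,\Lambda_{\omega}$ from below on $u$, it introduces $B_{\delta}:=(\delta\omega+f^{*}\sigma)\Lambda_{\omega}$, which is strictly positive for $\delta>0$, and proves that
\[
\bigl\langle B_{\delta}^{-1}(\dbar f^{*}\rho\wedge\varphi),\,\dbar f^{*}\rho\wedge\varphi\bigr\rangle_{h,\omega}\ \le\ D\,|\varphi|^{2}_{h,\omega}\,|\dbar\rho|^{2}_{\sigma}
\]
with $D$ independent of $\delta$ (Lemma~\ref{comp}). The point is that the single factor $\dbar f^{*}\rho$ contributes, in the coordinates diagonalizing $\omega$ and $f^{*}\sigma$, coefficients $g_{k}$ with $|g_{k}|^{2}\le|\dbar\rho|^{2}_{\sigma}\lambda_{k}$; this $\lambda_{k}$ in the numerator exactly cancels the $(\sum_{j\in J}(\delta+\lambda_{j}))^{-1}$ coming from $B_{\delta}^{-1}$, giving a bound uniform as $\delta\to0$. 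One then applies Cauchy--Schwarz in the form
\[
|\lla v,u\rra|^{2}\le\lla B_{\delta}^{-1}u,u\rra\,\lla B_{\delta}v,v\rra,
\]
lets $\delta\to0$ so that $\lla B_{\delta}v,v\rra\to\lla f^{*}\sigma\,\Lambda_{\omega}v,v\rra\le\|\dbar v\|^{2}+\|\dbar^{*}v\|^{2}$ by Bochner--Kodaira--Nakano and the curvature hypothesis, and concludes by Hahn--Banach/Riesz. No excision of a bad locus $Z$ or extension across $f^{-1}(Z)$ is needed; one works directly on $X$ with a complete K\"ahler form. In short, your plan is right in spirit but the crucial $L^{2}$ estimate cannot be obtained as a pointwise positivity of the curvature operator on $u$; the correct mechanism is the uniform boundedness of $\lla B_{\delta}^{-1}u,u\rra$, which is the content of Lemma~\ref{comp}.
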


This paper is organized as follows\,$:$ 
In Section \ref{2}, we will briefly recall results  
on the theory of harmonic integrals developed in \cite{Tak95}. 
We will prove Theorem \ref{m1} and Corollary \ref{m1-co} in Section \ref{3}, 
and prove Theorem \ref{m2} in Section \ref{4}.

\subsection*{Acknowledgement}
The author would like to thank Professors  
Osamu Fujino and Taro Fujisawa for reading the draft and giving useful comments. 
He is supported by the Grant-in-Aid 
for Young Scientists (B) $\sharp$25800051 from JSPS.

\section{Preliminaries}\label{2}
In this section, we fix the notation used in this paper and 
summarize the theory of harmonic integrals developed in \cite{Tak95}.

Throughout this paper, 
let $f \colon X \to Y$ be a proper surjective holomorphic map  
from a K\"ahler manifold $X$ with a K\"ahler form $\omega$ to an analytic space $Y$, 
and $E$ be a vector bundle on $X$ with a hermitian metric $h$. 
Let $n$ be the dimension of $X$. 
For a  Stein open set $U$ in $Y$, 
we take an exhaustive smooth plurisubharmonic function 
$\Psi_{U}$ on $U$, 
and put $\Phi_{U}:=f^{*}\Psi_{U}$, 
which is also exhaustive and plurisubharmonic. 
Following Takegoshi's result in \cite{Tak95}, 
we define the space of harmonic forms on $X_{U}:=f^{-1}(U)$ by   
\begin{equation*}
\mathcal{H}^{n, k}(X_{U}, E, \Phi_{U})
=\{ u \in C_{\infty}^{n, k}(X_{U}, E) \mid 
\dbar u =0,  \dbar^{*}u=0, (\dbar \Phi_{U})^{*}u=0 \text{ on } X_{U}.  \}, 
\end{equation*}
where $\dbar^{*}$ (resp. ($\dbar \Phi_{U})^{*}$) is the adjoint operator 
of $\dbar$ (resp. the wedge product $\dbar \Phi_{U} \wedge \bullet$). 
This space can actually be shown to be independent of the choice of 
exhaustive plurisubharmonic functions if the curvature of $h$ is semi-positive 
(see \cite[Theorem 4.3 (ii)]{Tak95}). 
The following theorem plays a central 
role in the proof of Theorem \ref{m1}.  

\begin{theo}$($\cite[Theorem 4.3, 5.2]{Tak95}$).$
\label{Tak}
Under the same notation as above, 
we assume that the curvature of $h$ is semi-positive in the sense of Nakano. 
Then we have the following\,$:$\\
$(1)$ The natural quotient map from 
the space of smooth $\dbar$-closed $E$-valued $(n,k)$-forms 
to the $($Dolbeault$)$ cohomology group 
induces the isomorphism 
$$
\mathcal{H}^{n, k}(X_{U}, E, \Phi_{U}) \xrightarrow{\quad \cong \quad }
H^{k}(X_{U}, K_{X}\otimes E). 
$$
$(2)$ For Stein open sets $U_{1}, U_{2}$ in $Y$ such that 
$U_{2} \subset U_{1}$, 
the restriction map 
$$\mathcal{H}^{n, k}(X_{U_{1}}, E, \Phi_{U_{1}}) 
\longrightarrow \mathcal{H}^{n, k}(X_{U_{2}}, E, \Phi_{U_{2}})$$
is well-defined, and further it satisfies the following commutative diagram\,$:$
$$
\begin{CD}
\mathcal{H}^{n, k}(X_{U_{1}}, E, \Phi_{U_{1}})
@>\cong>>
H^{k}(X_{U_{1}}, K_{X}\otimes E) \\ 
@VVV  @VVV
\\  
\mathcal{H}^{n, k}(X_{U_{2}}, E, \Phi_{U_{2}})
@>\cong>>
H^{k}(X_{U_{2}}, K_{X}\otimes E). 
\end{CD}
$$
\end{theo}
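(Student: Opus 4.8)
The plan is to read both assertions as statements of $L^{2}$ Hodge theory on the non-compact manifold $X_{U}=f^{-1}(U)$. Since $f$ is proper and $U$ is Stein, $\Phi_{U}=f^{*}\Psi_{U}$ is a smooth exhaustive plurisubharmonic function, so $X_{U}$ is weakly pseudoconvex and carries complete K\"ahler metrics (obtained from $\omega$ and $\Phi_{U}$ by the standard construction of Demailly). First I would fix such a complete metric together with the weight $e^{-\Phi_{U}}$ on $E$, and record the elementary relation between the weighted and unweighted adjoints of $\dbar$,
\begin{equation*}
\dbar^{*}_{\Phi_{U}}u=\dbar^{*}u+(\dbar\Phi_{U})^{*}u .
\end{equation*}
Consequently every form in $\mathcal{H}^{n,k}(X_{U},E,\Phi_{U})$ is automatically $L^{2}_{\Phi_{U}}$-harmonic in the sense $\dbar u=0$, $\dbar^{*}_{\Phi_{U}}u=0$, and the real content of $(1)$ splits into (a) identifying the $L^{2}_{\Phi_{U}}$-harmonic space with $H^{k}(X_{U},K_{X}\otimes E)$, and (b) showing that for such harmonic forms the two conditions $\dbar^{*}u=0$ and $(\dbar\Phi_{U})^{*}u=0$ in fact hold \emph{separately}.

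The analytic core is the weighted Bochner--Kodaira--Nakano identity for $E$-valued $(n,k)$-forms on the complete manifold, which in this bidegree takes the form
\begin{equation*}
\|\dbar u\|^{2}_{\Phi_{U}}+\|\dbar^{*}_{\Phi_{U}}u\|^{2}_{\Phi_{U}}
=\|D'^{*}_{\Phi_{U}}u\|^{2}_{\Phi_{U}}
+\lla [\sqrt{-1}\Theta_{h}(E),\Lambda]u,u\rra_{\Phi_{U}}
+\lla [\deldel\Phi_{U},\Lambda]u,u\rra_{\Phi_{U}},
\end{equation*}
where $D'$ is the $(1,0)$-part of the Chern connection and the last bracket is the contribution of the weight. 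On $(n,k)$-forms the bracket with $\sqrt{-1}\Theta_{h}(E)$ is a non-negative operator precisely because the curvature is Nakano semi-positive, and the bracket with $\deldel\Phi_{U}$ is non-negative because $\Phi_{U}$ is plurisubharmonic. Feeding this a priori positivity into the standard machinery on complete K\"ahler manifolds --- exhausting $X_{U}$ by sublevel sets of $\Phi_{U}$, solving $\dbar$ with H\"ormander--Demailly type $L^{2}$-estimates in each degree, and passing to the limit --- yields closed range for $\dbar$ and hence a Hodge-type orthogonal decomposition, so that every $L^{2}_{\Phi_{U}}$-Dolbeault class has a unique harmonic representative. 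Comparing the $L^{2}_{\Phi_{U}}$ complex of $(n,\bullet)$-forms with the fine resolution of $K_{X}\otimes E$ by sheaves of locally $L^{2}$ forms --- the global solvability of $\dbar$ guaranteeing that the two complexes have the same cohomology --- then identifies this $L^{2}_{\Phi_{U}}$-cohomology with the sheaf cohomology $H^{k}(X_{U},K_{X}\otimes E)$, which is assertion (a).

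For (b) I would specialize the identity to an $L^{2}_{\Phi_{U}}$-harmonic form $u$: the left-hand side vanishes, so each non-negative term on the right vanishes separately. The delicate point is to deduce from this the separate vanishing $\dbar^{*}u=0$ and $(\dbar\Phi_{U})^{*}u=0$, rather than merely $\dbar^{*}_{\Phi_{U}}u=\dbar^{*}u+(\dbar\Phi_{U})^{*}u=0$; this comes from Takegoshi's refinement of the identity, in which the plurisubharmonicity of $\Phi_{U}$ is exploited a second time through an integration by parts, and elliptic regularity then makes the resulting form smooth. This identifies the $L^{2}_{\Phi_{U}}$-harmonic space with $\mathcal{H}^{n,k}(X_{U},E,\Phi_{U})$ and completes $(1)$. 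For $(2)$, given Stein $U_{2}\subset U_{1}$, a form in $\mathcal{H}^{n,k}(X_{U_{1}},E,\Phi_{U_{1}})$ restricts to an $L^{2}$-form on $X_{U_{2}}$ --- properness of $f$ together with a comparison of the two exhaustions controlling the weight on the smaller set --- and I would define the restriction map as the harmonic projection of this restriction. The commutativity of the stated diagram is then automatic, since under the isomorphism of $(1)$ both vertical arrows become the ordinary functorial restriction of Dolbeault, equivalently sheaf, cohomology classes. I expect the main obstacle to be step (a): establishing closed range and global $L^{2}$-solvability of $\dbar$ on the non-compact $X_{U}$ and matching the resulting $L^{2}$-cohomology with the sheaf cohomology, uniformly enough to survive the limit over the exhaustion; the separation of the harmonicity conditions in (b) is algebraically subtle but purely local.
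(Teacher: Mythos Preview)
The paper does not prove this theorem at all: it is quoted in Section~\ref{2} as a preliminary result, with the citation \cite[Theorem 4.3, 5.2]{Tak95} standing in lieu of a proof. So there is no ``paper's own proof'' to compare against; the author simply imports Takegoshi's theory of harmonic integrals as a black box and uses it in Section~\ref{3}.

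That said, your sketch is a reasonable outline of how Takegoshi's argument actually goes, and the ingredients you name (complete K\"ahler metric built from $\omega$ and $\Phi_{U}$, weighted Bochner--Kodaira--Nakano, non-negativity of the curvature and Levi-form brackets on $(n,k)$-forms, $L^{2}$-solvability on complete manifolds, then the refined integration by parts separating $\dbar^{*}u=0$ from $(\dbar\Phi_{U})^{*}u=0$) are the correct ones. One point worth sharpening concerns part~(2): you propose to define the restriction map as \emph{harmonic projection} of the naive restriction. In fact the statement is stronger --- the naive restriction of a form in $\mathcal{H}^{n,k}(X_{U_{1}},E,\Phi_{U_{1}})$ already lies in $\mathcal{H}^{n,k}(X_{U_{2}},E,\Phi_{U_{2}})$, with no projection needed. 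This is exactly the content of the remark preceding the theorem in the paper, that $\mathcal{H}^{n,k}(X_{U},E,\Phi_{U})$ is independent of the choice of exhaustive plurisubharmonic function: the conditions $\dbar u=0$ and $\dbar^{*}u=0$ are local and survive restriction automatically, while the third condition $(\dbar\Phi_{U_{2}})^{*}u=0$ follows because one may replace $\Phi_{U_{2}}$ by $\Phi_{U_{1}}|_{X_{U_{2}}}$ without changing the harmonic space. Your version via projection still makes the diagram commute, but it obscures the feature that is actually used later in Section~\ref{3}, namely that $\mathcal{H}$ commutes on the nose with restriction and with $\delta$.
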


In this paper, 
we denote by $\mathcal{H}_{U}$, 
the inverse map  
of the natural quotient map in the above theorem,  
namely 
$$
\mathcal{H}_{U}\colon 
H^{k}(X_{U}, K_{X}\otimes E)\xrightarrow{\quad \cong \quad }
\mathcal{H}^{n, k}(X_{U}, E, \Phi_{U}).  
$$
Further we often omit the subscript. 
To avoid  confusion in the proof of Theorem \ref{m1}, 
we should clearly distinguish 
between the (Dolbeault) cohomology class 
and the $\dbar$-closed $E$-valued form. 
For this reason, we denote the equality in 
$H^{k}(X_{U}, K_{X}\otimes E)$  by $\equiv$, and  
the cohomology class 
determined by a $\dbar$-closed $E$-valued form $\bullet$ by $[\bullet]$.  
We remark that we have 
$\alpha \equiv [\mathcal{H}_{U}(\alpha)]$  for 
an arbitrary cohomology class 
$\alpha \in H^{k}(X_{U}, K_{X}\otimes E)$.

In the proof of Theorem \ref{m2}, 
we consider the $L^{2}$-space of $E$-valued $(n, k)$-forms on $X$ with respect to 
$h$ and $\omega$ defined by   
\begin{equation*}
L_{(2)}^{n, k}(X, E)_{h, \omega}:= 
\{u \mid u \text{ is an }E\text{-valued }(n, k)\text{-form on } X \text{ with } 
\|u \|_{h, \omega}< \infty. \}. 
\end{equation*}
Here $\|u \|_{h, \omega}$ denotes the $L^{2}$-norm defined by 
\begin{equation*}
\|u \|^{2}_{h, \omega}:=\lla u, u \rra  _{h, \omega}:=
\int_{X} 
\langle u, u\rangle _{h, \omega}\, dV_{\omega}, 
\end{equation*}
where $\langle u, u \rangle  _{h, \omega}$ is the point-wise inner product with respect 
to $h, \omega$ and $dV_{\omega}$ is the volume form defined by $dV_{\omega}:=\omega^{n}/n!$. 
By the Bochner-Kodaira-Nakano identity, 
we obtain 
\begin{align*}
\lla  \sqrt{-1}\Theta_{h}(E) \Lambda_{\omega}\, v, v \rra 
\leq \| \dbar v \|^{2} + \| \dbar^{*} v \|^{2}  
\end{align*}
for compactly supported and smooth $E$-valued $(n,k)$-form $v$. 
Here $\Lambda_{\omega}$ is the (point-wise) adjoint operator of 
the wedge product $\omega \wedge \bullet$ with respect to 
$\langle \bullet, \bullet \rangle_{h,\omega}$.

\section{Decomposition by Cohomology Groups of Higher Direct Images}\label{3}
The purpose of this section is to prove  Theorem \ref{m1} 
and Corollary \ref{m1-co}. 
We will give the construction of the homomorphism $\varphi_{p,q}$ in subsection \ref{3-2}, 
by using the harmonic forms representing cohomology classes.  
Further we will show that $\varphi_{p,q}$ is injective 
in subsection \ref{3-3}  
and it gives a direct sum in subsection \ref{3-4}. 

\subsection{Set up}\label{3-1}
Throughout this section, 
we freely use the notation in the statement of the following theorem.

\begin{theo}[=Theorem \ref{m1}]
Let $f \colon X \to Y$ be a proper surjective holomorphic map  
from a K\"ahler manifold $X$ to an analytic space $Y$, 
and $E$ be a vector bundle on $X$ admitting $($hermitian$)$ metric 
with semi-positive curvature in the sense of Nakano. 
Then, for integers $p, q \geq 0$, there exists a natural homomorphism
\begin{equation*}
\varphi_{p,q} \colon H^{p}(Y, R^{q}f_{*}(K_{X}\otimes E)) 
\to H^{p+q}(X, K_{X}\otimes E)
\end{equation*}
with the following properties\,$:$ 
\begin{enumerate}
\item[$\bullet$] The homomorphism $\varphi_{p,q}$ is injective. 
\item[$\bullet$] ${\rm{Im}}\, \varphi_{p, q} \cap {\rm{Im}}\, \varphi_{p',q'} = \{0\}$ 
when $p \not = p'$ and $p + q=p' + q'$. 
\end{enumerate}
\end{theo}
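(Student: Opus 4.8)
The plan is to construct $\varphi_{p,q}$ via a \v{C}ech–Dolbeault double complex built from harmonic representatives, and then to extract injectivity and the direct-sum property from the strictness of the resulting filtration on $H^{p+q}(X, K_X \otimes E)$. Concretely, I would fix a Stein open cover $\mathcal{U} = \{U_i\}$ of $Y$ (a Leray cover for the sheaves $R^q f_*(K_X \otimes E)$) and consider, for each multi-index $i_0 \dots i_p$, the space $\mathcal{H}^{n,q}(X_{U_{i_0 \dots i_p}}, E, \Phi_{U_{i_0 \dots i_p}})$ of harmonic forms supplied by Theorem \ref{Tak}. By part (1) of that theorem these compute $H^q(X_{U_{i_0\dots i_p}}, K_X \otimes E)$, which by the Leray/Stein setup is the sections of $R^q f_*(K_X \otimes E)$ over $U_{i_0\dots i_p}$; by part (2) the restriction maps are compatible with the \v{C}ech differential. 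So a class in $H^p(Y, R^q f_*(K_X\otimes E))$ is represented by a \v{C}ech cocycle $\{\alpha_{i_0\dots i_p}\}$ with each $\alpha_{i_0\dots i_p}$ a $\dbar$-closed harmonic $(n,q)$-form. The first real step is to assemble these into a genuine $\dbar$-closed global $(n, p+q)$-form on $X$: one does the standard descent through the double complex, repeatedly solving $\dbar$ on Stein pieces and patching with a partition of unity subordinate to $f^{-1}(\mathcal{U})$, producing a class in $H^{p+q}(X, K_X \otimes E)$; this is $\varphi_{p,q}$. One must check it is well-defined (independent of the partition of unity, of the solutions to the $\dbar$-equations used in the descent, and of the cocycle representative) — this is routine homological algebra for the \v{C}ech–Dolbeault double complex, the only nonstandard ingredient being that the top-level entries are required to be \emph{harmonic}.

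The key structural point — and the source of both bulleted conclusions — is that the construction realizes the $E_2$-terms of the Leray spectral sequence (which, via harmonic theory, already equal the $E_\infty$-terms by Takegoshi's degeneration) as honest subspaces of $H^{p+q}(X, K_X \otimes E)$. I would argue injectivity of $\varphi_{p,q}$ as follows: if $\varphi_{p,q}([\{\alpha_{i_0\dots i_p}\}]) \equiv 0$ in $H^{p+q}(X, K_X\otimes E)$, then the assembled form is $\dbar$-exact globally; running the descent \emph{backwards} and using part (1) of Theorem \ref{Tak} (uniqueness of the harmonic representative in each Dolbeault class on $X_{U_{i_0\dots i_p}}$, hence the harmonic representative of the zero class is zero) forces each $\alpha_{i_0\dots i_p}$ to be \v{C}ech-cohomologous to zero, i.e. the original class vanishes. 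The harmonicity is exactly what rigidifies the representative and prevents a nonzero \v{C}ech class from becoming exact in the total complex. For the second bullet, suppose $\xi \in \operatorname{Im}\varphi_{p,q} \cap \operatorname{Im}\varphi_{p',q'}$ with $p < p'$ and $p+q = p'+q'$. The \v{C}ech filtration on the total complex gives a decreasing filtration $F^\bullet$ on $H^{p+q}(X, K_X\otimes E)$, and by construction $\operatorname{Im}\varphi_{r, \ell-r} \subset F^r$ while its image in $\operatorname{gr}^r = F^r/F^{r+1}$ is precisely the injective image of $H^r(Y, R^{\ell-r}f_*(\cdots))$ — here is where Takegoshi's degeneration at $E_2$ is used, to identify $\operatorname{gr}^r$ with the full $E_2^{r,\ell-r}$ and to know the differentials vanish. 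Then $\xi \in F^{p'} $ because it lies in $\operatorname{Im}\varphi_{p',q'}$, so its image in $\operatorname{gr}^p$ is zero; but $\xi \in \operatorname{Im}\varphi_{p,q}$ and the image of $\varphi_{p,q}$ in $\operatorname{gr}^p$ is injective on $H^p(Y,R^q f_*(\cdots))$, forcing the class $\xi$ pulls back from to be zero, whence $\xi = 0$.

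The main obstacle I anticipate is \textbf{making the spectral-sequence bookkeeping rigorous with harmonic representatives while avoiding circularity}: one wants to use Takegoshi's $E_2$-degeneration to identify the graded pieces, but one must be careful that the harmonic descent genuinely computes the associated graded of the \v{C}ech filtration on $H^*(X, K_X\otimes E)$ and not merely a sub-quotient, and that the splitting produced is compatible across the different $(p,q)$ with $p+q$ fixed. A clean way to finesse this is to never invoke an abstract spectral sequence at all: work directly with the filtered total complex $\operatorname{Tot}(\mathcal{C}^\bullet(\mathcal{U}, \mathcal{A}^{n,\bullet}))$, observe via Theorem \ref{Tak}(1)–(2) that on each fixed \v{C}ech degree the Dolbeault cohomology is computed by harmonic forms and the restriction maps are exact at the \v{C}ech level in positive degree (Takegoshi's injectivity-type statement for $p=0$, applied to each $X_{U_{i_0\dots i_p}}$), and deduce that the filtration is strict — i.e. $\varphi_{p,q}$ is injective and the images meet only in zero — essentially by a diagram chase. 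The delicate technical checks are then: (i) that harmonicity is preserved well enough through the patching to pin down representatives, and (ii) that the partition-of-unity corrections, which are \emph{not} harmonic, do not spoil the injectivity argument — this is handled by noting they contribute only to lower steps of the filtration, so they are invisible in the relevant graded piece.
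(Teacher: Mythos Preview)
Your construction of $\varphi_{p,q}$ and your injectivity argument match the paper's almost exactly: take a Stein cover, represent a \v{C}ech $p$-cocycle by harmonic $(n,q)$-forms via Theorem~\ref{Tak}, observe that property~(2) makes the $\delta$-cocycle condition hold at the level of \emph{forms}, then zigzag through the \v{C}ech--Dolbeault complex with a partition of unity to produce a global $(n,p+q)$-form; for injectivity, run the zigzag backwards, replacing each $\dbar$-closed piece by its harmonic representative plus a $\dbar$-exact correction. This is precisely \S\ref{3-2}--\S\ref{3-3}.

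Where you diverge is the second bullet. The paper (\S\ref{3-4}) does \emph{not} invoke the \v{C}ech filtration or Takegoshi's $E_2$-degeneration; instead it repeats the backwards chase of the injectivity proof with $b-b'$ in place of $b$, using $p<p'$ so that after $p$ steps one reaches $\{\mathcal{H}(\alpha_{i_0\dots i_p}) - \dbar b'_{i_0\dots i_p}\}$, where the second term is $\dbar$-exact and hence killed by $\mathcal{H}$. Your filtration argument --- $\varphi_{p,q}$ lands in $F^p$, the composite with $F^p \to \operatorname{gr}^p \cong E_2^{p,q}$ is the identity by degeneration, hence the images are in direct position --- is correct and conceptually cleaner, but it imports Takegoshi's degeneration theorem as an external ingredient. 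The paper's hands-on chase is self-contained and in fact \emph{reproves} degeneration as a byproduct (indeed the introduction says Theorem~\ref{m1} is stronger than degeneration). Your worry about circularity is therefore legitimate in spirit but not in fact: degeneration is established independently in \cite{Tak95}, so citing it is permissible; the paper simply chooses not to. Your final ``finessing'' paragraph gestures toward the paper's direct approach but is too vague to count as an argument --- the phrase ``restriction maps are exact at the \v{C}ech level in positive degree'' does not correspond to a precise statement you could use here.
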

We fix a K\"ahler form $\omega$ on $X$ and a metric $h$ on $E$ 
whose curvature is semi-positive in the sense of Nakano. 
By the assumption of the curvature of $h$, 
we can represent a cohomology class $\A$ 
by the associated harmonic form $\HH(\A)$ on $f^{-1}(U)$ 
for a Stein open set $U$ in $Y$.

\subsection{Construction of $\varphi_{p,q}$}\label{3-2}
First we take a Stein open cover $\mathcal{U}:=\{U_{i}\}_{i\in I}$ of $Y$, and 
consider the standard isomorphism 
$$
H^{p}(Y, R^{q}f_{*}(K_{X}\otimes E)) \cong 
\check{H}^{p}(\mathcal{U}, R^{q}f_{*}(K_{X}\otimes E)), 
$$
where the right hand side is 
the $\rm{\check{C}}$ech cohomology group calculated by $\mathcal{U}$. 
Since the open set $\U{p}:=U_{i_{0}}\cap...\cap U_{i_{p}}$ is also Stein, 
we have the natural isomorphism 
$$
H^{0}(\U{p}, R^{q}f_{*}(K_{X}\otimes E)) \xrightarrow{\quad \cong \quad}
H^{q}(f^{-1}(\U{p}), K_{X}\otimes E). 
$$
By this isomorphism, 
we identify 
the $p$-cochains valued in $R^{q}f_{*}(K_{X}\otimes E)$ 
with the $p$-cochains $\CC{\A}{p}_{i_{0}...i_{p}}$ valued in 
the (Dolbeault) cohomology of $E$-valued $(n,q)$-forms. 
It is sufficient for the construction of $\varphi_{p,q}$ 
to define $\varphi_{p,q}(\CC{\A}{p}_{i_{0}...i_{p}})$ 
for a given $p$-cochain $\CC{\A}{p}_{i_{0}...i_{p}}$ 
of the cohomology classes 
$\C{\A}{p} \in H^{q}(f^{-1}(\U{p}), K_{X}\otimes E)$ 
satisfying $\delta(\CC{\A}{p}_{i_{0}...i_{p}})\equiv 0$. 
Here $\equiv$ is the equality in the (Dolbeault) cohomology group  
and $\delta$ is the coboundary operator defined by
$$
\delta (\CC{\A}{p}_{i_{0}...i_{p}}):= 
\{\sum_{k=0}^{p+1} (-1)^{k} \A_{i_{0}...\hat{i_{k}}...i_{q+1}}  
|_{ f^{-1}(\U{p+1})} \}_{i_{0}...i_{p+1}}. 
$$
For simplicity, we put $\V{p}:=f^{-1}(\U{p})$, 
and further we omit the notation of the restriction in the right hand side
and the subscript, such as $\lq \lq i_{0}...i_{p}"$.

For an arbitrary $p$-cocycle $\CC{\A}{p}$ 
of $\C{\A}{p} \in H^{q} (\V{p}, K_{X}\otimes E)$, 
we will construct a $\dbar$-closed $E$-valued $(n, p+q)$-form on $X$ 
and define $\varphi_{p,q}(\CC{\A}{p}) \in H^{p+q}(X, K_{X}\otimes E)$
by its cohomology class.

First we consider the case $p=0$. 
In this case, by $\delta ( \{\A_{i_{0}}\}) \equiv 0$, 
we have
$$
\delta ( \{\HH (\A_{i_{0}})\}) 
=\HH(\delta ( \{\A_{i_{0}}\})) 
= 0. 
$$
Here we implicitly used the property (2) in Theorem \ref{Tak}. 
We remark that the above equality is 
the equality as the $E$-valued $(n,q)$-forms 
(not the cohomology classes). 
By the above equality, the family 
$\{\HH (\A_{i_{0}})\}$ is a $0$-cocycle of the $E$-valued $(n,q)$-forms, 
and thus it determines the $E$-valued $(n, q)$-form globally defined on $X$. 
Then we define $\varphi_{0,q}$ by $\varphi_{0,q}(\{\A_{i_{0}}\}):=[\{\HH (\A_{i_{0}})\}]$, 
where $[\bullet]$ denotes the cohomology class determined 
by a $\dbar$-closed $E$-valued form $\bullet$.

From now on, we consider the case $p>0$. 
For a given $p$-cocycle $\CC{\A}{p}$, 
by $\delta (\CC{\A}{p}) \equiv 0 $, 
we have  
$$
\delta ( \{ \HH (\C{\A}{p}) \} )
=\HH(\delta (\CC{\A}{p})) 
= 0. 
$$
Since 
the higher direct images of 
the \lq \lq smooth" $E$-valued $(n,q)$-forms are fine sheaves, 
there exists a $(p-1)$-cochain $\CC{b}{p-1}$ valued in 
the (not necessarily $\dbar$-closed) $E$-valued $(n,q)$-forms 
such that $\{ \HH (\C{\A}{p}) \}=\delta (\CC{b}{p-1})$. 
In fact, we can concretely construct $\CC{b}{p-1}$ 
by using a partition of unity associated to $\mathcal{U}$ 
from $\HH (\C{\A}{p})$(see Lemma \ref{par}). 
(The construction of $\CC{b}{p-1}$ 
plays an important role in the proof of Theorem \ref{m2}.)  
Since $\HH (\C{\A}{p})$ is $\dbar$-closed,  
we can easily see that 
$$
\delta( \{\dbar \C{b}{p-1} \})=\dbar \delta (\CC{b}{p-1})= \dbar\{ \HH (\C{\A}{p}) \}=0. 
$$
Then, by the same argument, 
there exists  $(p-2)$-cochain $\CC{b}{p-2}$  valued in 
the $E$-valued $(n,q+1)$-forms  
such that  $ \{\dbar \C{b}{p-1} \}=\delta (\CC{b}{p-2})$. 
Since $\dbar \C{b}{p-1}$ is also $\dbar$-closed, 
we obtain $\delta( \{\dbar \C{b}{p-2} \})=0$. 
Therefore there exists $(p-3)$-cochain $\CC{b}{p-3}$ valued in 
the $E$-valued $(n,q+2)$-forms 
such that $ \{\dbar \C{b}{p-2} \}=\delta (\CC{b}{p-3})$. 
By repeating this process, for $k=1, 2, \dots, p$,  
we can obtain $(p-k)$-cochain  $\CC{b}{p-k}$ 
valued in the $E$-valued $(n,q+k-1)$-forms  
satisfying the following equalities\,$:$ 

\[
  (*) \left\{ \quad
  \begin{array}{ll}
\vspace{0.2cm}
    \{ \HH (\C{\A}{p}) \}&=\delta (\CC{b}{p-1}),  \\
\{\dbar \C{b}{p-1} \}&=\delta (\CC{b}{p-2}),  \\
 & \vdots   \\
\{\dbar b_{i_{0}i_{1}} \}&=\delta (\{b_{i_{0}}\}).
  \end{array} \right.
\]
By the last equality, we can easily check $\delta (\{\dbar b_{i_{0}}\})=0$, 
which says that $\{\dbar b_{i_{0}}\}$ determines the $E$-valued 
$(n,p+q)$-form on $X$. 
We define $\varphi_{p,q}(\CC{\A}{p})$ by $\varphi_{p,q}(\CC{\A}{p}):=[\{\dbar b_{i_{0}}\}]$.

At the end of this subsection, 
we prove that the map $\varphi_{p,q}$ is well-defined. 
When $p$ is zero, the equality $\{\A_{i_{0}}\}\equiv \{\A'_{i_{0}}\}$ 
implies that $\HH(\A_{i_{0}})=\HH(\A'_{i_{0}})$ on $U_{i_{0}}$. 
Therefore $\varphi_{0,q}$ is well-defined. 
In the case $p>0$, for given  
$\CC{\A}{p}$ and $\CC{\A'}{p}$ such that 
$\CC{\A}{p} \equiv \CC{\A'}{p} + \delta (\CC{c}{p-1})$ 
for some $(p-1)$-cochain $\CC{c}{p-1}$,  
we take an arbitrary $\CC{b}{p-k}$ satisfying $(*)$ 
(resp. $\CC{b'}{p-k}$) satisfying $(*)$ 
for $\CC{\A}{p}$ (resp. $\CC{\A'}{p}$). 
Then there exists a $(p-2)$-cochain $\CC{d}{p-2}$ such that 
$$
\{ \C{b}{p-1} - \C{b'}{p-1}- \HH(\C{c}{p-1})\} 
= \delta( \{\C{d}{p-2} \} )
$$
since the left hand side is $\delta$-closed. 
Putting $\C{c}{p-2} = \C{ \dbar d}{p-2}$, 
we can take a $(p-3)$-cochain 
$\CC{d}{p-3}$ satisfying
$$
\{ \C{b}{p-2} - \C{b'}{p-2}- \C{c}{p-2}\} 
= \delta( \{\C{d}{p-3} \} ) 
$$
since the right hand side is $\delta$-closed. 
By putting $\C{c}{p-3} = \C{ \dbar d}{p-3}$ again, 
we can repeat this process, 
and thus we obtain 
$\C{c}{p-k} := \C{\dbar d}{p-k}$ for $k=1,2,\dots,p$. 
Then, by the same argument, 
we can easily check $ \delta \{b_{i_{0}} - b'_{i_{0}} - c_{i_{0}} \} =0$, 
which says that 
$\{b_{i_{0}} - b'_{i_{0}} - c_{i_{0}} \}$ determines the $E$-valued $(n,p+q-1)$-form $\eta$ on $X$. 
Then we have  
 $\{ \dbar b_{i_{0}}\} = \{\dbar b'_{i_{0}}\} + \dbar \eta$ 
since $c_{i_{0}}$ is $\dbar$-closed. 
Therefore $\{ \dbar b_{i_{0}}\} $ and $ \{\dbar b'_{i_{0}}\} $ 
give the same cohomology class. 
We can easily see that the map $\varphi_{p,q}$ 
does not depend on the choice of Stein open covers.

\subsection{Injectivity of $\varphi_{p,q}$}\label{3-3}
In this subsection, we will show that 
the map $\varphi_{p,q}$ 
constructed in subsection \ref{3-2} is injective, 
by using the theory of harmonic integrals again.

First we consider the case $p=0$. 
If we have $\varphi_{0,q}(\{\A_{i_{0}}\})\equiv 0$, 
then there exists an $E$-valued $(n,q-1)$-form $\eta$ on $X$  
such that $\{ \HH (\A_{i_{0}}) \} = \dbar \eta$. 
Since the map 
$$
\mathcal{H}\colon 
H^{q}(V_{i_{0}}, K_{X}\otimes E)\xrightarrow{\quad \cong \quad }
\mathcal{H}^{n, q}(V_{i_{0}}, E, \Phi_{U_{i_{0}}}).  
$$
is the inverse map of the natural quotient map (see Theorem \ref{Tak}), 
we have  $\A_{i_{0}} \equiv [ \HH (\A_{i_{0}}) ]$. 
Therefore we obtain  
$$
\A_{i_{0}} \equiv [ \HH (\A_{i_{0}}) ] \equiv [\dbar \eta] \equiv 0. 
$$

From now on, we consider the case $p>0$. 
We assume that $\varphi_{p,q}(\{ \C{\A}{p} \})\equiv 0$ for a $p$-cocycle $\{ \C{\A}{p} \}$. 
It is sufficient for the proof to construct a 
$(p-1)$-cochain $\CC{c}{p-1}$ valued in 
the $E$-valued $(n, q)$-forms 
satisfying the following properties\,$:$
$$
\bullet \ \   \CC{\A}{p}  \equiv \delta (\{[\C{c}{p-1}]\}).  
\quad \quad \quad 
\bullet \ \ \C{c}{p-1} \text{ is }  \dbar\text{-}\text{closed.}
$$
When we take a $(p-k)$-cochain $\CC{b}{p-k}$ satisfying $(*)$ 
for $k=1,2,\dots,p$, 
we have $\varphi_{p,q}(\{ \C{\A}{p} \})=[ \{ \dbar b_{i_{0}} \}  ] \equiv 0$. 
Therefore there exists an $E$-valued $(n,p+q-1)$-form $\eta$ on $X$
such that $\{ \dbar b_{i_{0}} \} = \dbar \eta$.
By putting $c_{i_{0}}:= b_{i_{0}} - \eta$, we have the following properties\,$:$ 
\begin{equation*}
\bullet \ \  \{ \dbar b_{i_{0}i_{1}} \}= \delta (\{c_{i_{0}}\}).  \quad \quad \quad 
\bullet \ \ c_{i_{0}} \text{ is }  \dbar\text{-}\text{closed.}
\end{equation*}
Then we can obtain  
$0=\delta (\{ \HH(c_{i_{0}}) \})$ since 
the left hand side in the above equality 
is $\dbar$-exact on $V_{i_{0}i_{1}}$.  
Further we can take an $E$-valued $(n,p+q-2)$-form 
$d_{i_{0}}$ such that 
$c_{i_{0}} = \HH(c_{i_{0}}) + \dbar d_{i_{0}} $
since $c_{i_{0}}$ and $\HH(c_{i_{0}})$ 
determine the same cohomology class.  
From the above argument, we can easily see that  
$$
\{ \dbar b_{i_{0}i_{1}} \} 
= \delta(\{ \HH(c_{i_{0}}) + \dbar d_{i_{0}}\}) 
= \delta(\{\dbar d_{i_{0}}\})=\dbar \delta (\{d_{i_{0}}\}). 
$$
Putting $\{c_{i_{0}i_{1}}\}:=\{b_{i_{0}i_{1}}\} - \delta (\{d_{i_{0}}\})$, 
we have the following properties\,$:$ 
\begin{equation*}
\bullet \ \  \{ \dbar b_{i_{0}i_{1}i_{2}} \}= \delta (\{c_{i_{0}i_{1}}\}).  \quad \quad \quad 
\bullet \ \ c_{i_{0}i_{1}} \text{ is }  \dbar\text{-}\text{closed.}
\end{equation*}
By repeating this process, 
we obtain a $(p-1)$-cochain $\CC{c}{p-1}$
valued in the $\dbar$-closed $E$-valued $(n, q)$-forms 
satisfying $\{ \HH(\C{\alpha}{p}) \} 
= \delta (\CC{c}{p-1})$. 
Then we can easily see that   
$$
\CC{\alpha}{p} \equiv \{ [\HH(\C{\alpha}{p})] \} \equiv \delta (\{[\C{c}{p-1}]\}). 
$$
This completes the proof of the injectivity.

\subsection{On the image of $\varphi_{p,q}$}\label{3-4}
In this subsection, we prove that 
${\rm{Im}}\, \varphi_{p,q} \cap {\rm{Im}}\, \varphi_{p',q'} = \{0\}$ 
when $p \not = p'$ and $p + q=p' + q'$.  
Without loss of generality, we may assume $p<p'$. 
For a $p$-cocycle $\CC{\alpha}{p}$ 
and a $p'$-cocycle $\CC{\alpha'}{p'}$ valued 
in the (Dolbeault) cohomology group, 
we assume that $\varphi_{p,q}(\CC{\alpha}{p}) = \varphi_{p',q'}(\CC{\alpha'}{p'})$. 
We take a $(p-k)$-cochain $\CC{b}{p-k}$ for $k=1,2,\dots, p$
(resp. a $(p'-k')$-cochain $\CC{b'}{p'-k'}$ for $k'=1,2,\dots, p'$) satisfying $(*)$. 
By the assumption, there exists an $E$-valued 
$(n,p+q-1)$-form $\eta$ on $X$ 
such that $\{\dbar b_{i_{0}} \} -\{ \dbar b'_{i_{0}}\}=\dbar \eta$. 
Then, by putting $c_{i_{0}}:=b_{i_{0}}-b'_{i_{0}}-\eta$, 
we have the same properties as in subsection \ref{3-3}\,$:$ 
\begin{equation*}
\bullet \ \  \{ \dbar (b_{i_{0}i_{1}} - b'_{i_{0}i_{1}}) \} 
= \delta (\{c_{i_{0}}\}).  \quad \quad \quad 
\bullet \ \ c_{i_{0}} \text{ is }  \dbar\text{-}\text{closed.}
\end{equation*}
By repeating the same argument as in subsection \ref{3-3}, 
we can construct a $(p-1)$-cochain $\CC{c}{p-1}$ valued in 
the $\dbar$-closed $E$-valued $(n,q)$-forms 
satisfying 
$\{ \HH(\C{\alpha}{p})-\dbar \C{b'}{p'} \}=\delta (\CC{c}{p-1}) $. 
Here we used the assumption of $p<p'$.  
By taking $\HH(\bullet)$, we obtain 
$\{ \HH(\C{\alpha}{p}) \}=\delta (\{ \HH(\C{c}{p-1}) \}) $, 
and thus we can show   
$$
\{ \C{\A}{p} \} \equiv \{[\HH(\C{\A}{p-1})]\}
\equiv 
\delta (\{ [\HH(\C{c}{p-1})] \}).  
$$ 
This completes the proof.

\subsection{Proof of Corollary \ref{m1-co}}\label{3-5}
In this subsection, we give a proof of Corollary \ref{m1-co}. 
Let $f \colon X \to Y$ be a proper surjective holomorphic map 
from a compact K\"ahler manifold $X$ to an analytic space $Y$. 
Since $E$ is assumed to admit a metric 
with semi-positive curvature in the sense of Nakano, 
we have 
\begin{equation*}
H^{\ell}(X, K_{X}\otimes E) \supset \bigoplus_{p+q=\ell}\, {\rm{Im}}\, \varphi_{p,q}. 
\end{equation*}
by Theorem \ref{m1}. 
Takegoshi proved that the Leray spectral sequence for $f$
$$
H^{p}(Y, R^{q}f_{*}(K_{X}\otimes E) ) \Longrightarrow H^{p+q}(X, K_{X}\otimes E)
$$
degenerates at $E_{2}$-term (see \cite[I Decomposition theorem ]{Tak95}). 
In particular, when $X$ is a compact K\"ahler manifold, 
$\dim H^{\ell}(X, K_{X}\otimes E)$ is equal to 
$\sum_{p+q=\ell}\dim H^{p}(Y, R^{q}f_{*}(K_{X}\otimes E) )$. 
This leads to the first conclusion of Corollary \ref{m1-co}.

Now we prove the latter conclusion of Corollary \ref{m1-co}. 
Let $F$ be a line bundle on $X$ admitting a metric 
with semi-positive curvature. 
The multiplication maps $\Phi_{s}$ and $\Psi_{s}$ 
induced by a section $s$ of $F^m$ satisfy  
the following commutative diagram\,$:$ 
$$
\begin{CD}
H^{p}(Y, R^q f_{*}(K_{X}\otimes E\otimes F))
@>\varphi_{p,q}>>
H^{p+q}(X, K_{X}\otimes E\otimes F) \\ 
@V\Phi_{s}VV  @V\Psi_{s}VV
\\  
H^{p}(Y, R^q f_{*}(K_{X}\otimes E\otimes F^{m+1})) 
@>\varphi'_{p,q}>>
H^{p+q}(X, K_{X}\otimes E\otimes F^{m+1})
\end{CD}
$$
Indeed, for a given 
$\CC{\alpha}{p} \in H^{p}(Y, R^q f_{*}(K_{X}\otimes E\otimes F))$, 
we can check $[s\mathcal{H}(\C{\alpha}{p})] \equiv s\C{\alpha}{p}$. 
Then we have $s\mathcal{H}(\C{\alpha}{p}) \equiv \mathcal{H}({s\C{\alpha}{p}})$
since $s\mathcal{H}(\C{\alpha}{p})$ is also harmonic 
by \cite[Proposition 4.4]{Tak95}. 
It implies that, for a $(p-k)$-cochain  $\CC{b}{p-k}$ satisfying $(*)$ for 
$\CC{\alpha}{p}$, 
the $(p-k)$-cochain $\CC{sb}{p-k}$ also satisfies 
$(*)$ for $\CC{s\alpha}{p}$. 
Therefore the above diagram is commutative.

Since $F$ admits a metric with semi-positive curvature, 
the vector bundles $E\otimes F$ and $E\otimes F^{m+1}$ are so.
The map $\varphi_{p,q}$ and $\varphi'_{p,q}$ are injective 
by Theorem \ref{m1}, 
and further $\Psi_{s}$ is also injective 
by the standard injectivity theorem (see \cite{Eno90}, \cite{Tak95}). 
Therefore the multiplication map $\Phi_{s}$ is also injective.

\section{Vanishing Theorem for Cohomology Groups 
of Higher Direct Images}\label{4}
In this section, we give a proof of Theorem \ref{m2}. 
The proof is a combination of Theorem \ref{m1} 
and techniques to solve the $\dbar$-equation with the $L^{2}$-estimate. 
\begin{theo}[= Theorem \ref{m2}]
Let $f \colon X \to Y$ be a proper surjective holomorphic map 
from a weakly pseudoconvex K\"ahler manifold $X$ to an analytic space $Y$ and $E$ be a vector bundle on $X$.  
If $E$ admits a metric $g$ satisfying  
$\sqrt{-1}\Theta_{g}(E) \geq f^{*}\sigma \otimes {\rm{id}}_{E}$ 
for some K\"ahler form $\sigma$ on $Y$ in the sense of Nakano,  
then we have 
$$
H^{p}(Y, R^q f_{*}(K_{X}\otimes E)) =0 
\quad \text{for any } p>0 \text{ and } q\geq0. 
$$  
\end{theo}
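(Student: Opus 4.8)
The plan is to prove the vanishing $H^p(Y, R^q f_*(K_X \otimes E)) = 0$ for $p > 0$ by showing that the injective map $\varphi_{p,q}$ from Theorem \ref{m1} is in fact the zero map, so that its domain vanishes. To this end, I fix a Stein open cover $\mathcal{U} = \{U_i\}$ of $Y$ and a $p$-cocycle $\{\alpha_{i_0 \dots i_p}\}$ valued in the Dolbeault cohomology groups $H^q(V_{i_0 \dots i_p}, K_X \otimes E)$, and I run the construction of $\varphi_{p,q}$ exactly as in Section \ref{3}: I build the cochains $\{b_{i_0 \dots i_{p-k}}\}$ satisfying the system $(*)$, where $b_{i_0}$ is an $E$-valued $(n, p+q-1)$-form on $X$ with $\{\dbar b_{i_0}\}$ the globally defined $(n,p+q)$-form representing $\varphi_{p,q}(\{\alpha_{i_0 \dots i_p}\})$. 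The strategy is to solve the $\dbar$-equation on $X$ to produce a global $(n, p+q-1)$-form $\eta$ with $\dbar \eta = \{\dbar b_{i_0}\}$, which forces $\varphi_{p,q}(\{\alpha_{i_0 \dots i_p}\}) \equiv 0$, and then invoke injectivity of $\varphi_{p,q}$ to conclude $\{\alpha_{i_0 \dots i_p}\} \equiv 0$ in $\check{H}^p(\mathcal{U}, R^q f_*(K_X \otimes E)) \cong H^p(Y, R^q f_*(K_X \otimes E))$.

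To solve this $\dbar$-equation with control, I use the $L^2$-method: since $X$ is weakly pseudoconvex, I can choose an exhaustive plurisubharmonic function on $X$ and, by the standard Demailly-type argument (adding a rapidly increasing convex function of the exhaustion to the metric on $E$, and passing to a complete K\"ahler metric), reduce to the a priori estimate furnished by the Bochner–Kodaira–Nakano inequality recalled at the end of Section \ref{2}. The curvature hypothesis $\sqrt{-1}\Theta_g(E) \geq f^* \sigma \otimes \mathrm{id}_E$ in the sense of Nakano gives, for an $E$-valued $(n, p+q)$-form $v$ with $p + q \geq 1$, the lower bound $\lla \sqrt{-1}\Theta_g(E)\Lambda_\omega v, v \rra \geq \lla f^*\sigma \Lambda_\omega v, v \rra$; here one must use that $\sigma$, being a K\"ahler form on $Y$ (in the sense discussed in Section \ref{4}), is \emph{strictly} positive and hence $f^*\sigma$, while possibly degenerate along the fibers of $f$, is nonnegative with strictly positive eigenvalues in the horizontal directions, so that the pairing with an $(n,k)$-form ($k \geq 1$) is bounded below by a positive multiple of $\|v\|^2$ on the locus where $f$ is a submersion — and a limiting/approximation argument handles the rest. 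This yields, for $\dbar$-closed $g$-$L^2$ data in bidegree $(n, p+q)$, a solution of the $\dbar$-equation in bidegree $(n, p+q-1)$.

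The one remaining point is that the data $\{\dbar b_{i_0}\}$ must be made $L^2$ with respect to an admissible metric. This is exactly why the \emph{concrete} construction of the $b_{i_0 \dots i_{p-k}}$ via a partition of unity subordinate to $\mathcal{U}$ (Lemma \ref{par}, as flagged in Section \ref{3}) is needed: each $b_{i_0 \dots i_{p-k}}$ is a finite sum of products of bounded smooth functions (the partition of unity and its derivatives) with the harmonic representatives $\mathcal{H}(\alpha_{j_0 \dots j_p})$, which are smooth on the relevant $V$'s; properness of $f$ lets me shrink to a locally finite refinement so that $\{\dbar b_{i_0}\}$ has compact support overlap structure, and multiplying the metric on $E$ by $e^{-\chi(\psi)}$ for a suitable rapidly growing convex $\chi$ makes the finitely many local pieces $L^2$ while preserving the Nakano curvature bound up to the added semi-positive Hessian term. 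The main obstacle is precisely this interplay between positivity and integrability on a noncompact $X$: ensuring that the weight $\chi(\psi)$ can be chosen large enough to make the (smooth, but a priori non-$L^2$) form $\{\dbar b_{i_0}\}$ square-integrable without destroying either the completeness of the metric or the curvature inequality, and dealing with the degeneracy of $f^*\sigma$ along the fibers so that the $L^2$-estimate still closes in bidegree $(n, p+q)$ with $p \geq 1$. The final sentence, $H^0(Y, R^q f_*(K_X \otimes E)) \cong H^q(X, K_X \otimes E)$, then follows immediately from the collapse of the Leray spectral sequence forced by the vanishing of all higher rows, combined with part (1) of Theorem \ref{Tak}.
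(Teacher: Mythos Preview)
Your overall architecture matches the paper exactly: reduce the vanishing to showing that $\varphi_{p,q}$ is the zero map by solving $\dbar\eta=\{\dbar b_{i_0}\}$ globally via the $L^2$-method, with the $b$'s built explicitly from a partition of unity (Lemma~\ref{par}) and the metric twisted by $e^{-\chi\circ\Phi}$. The gap is in how you close the a~priori estimate.

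You assert that $\lla f^*\sigma\,\Lambda_\omega v,v\rra$ is bounded below by a positive multiple of $\|v\|^2$, at least on the submersion locus, with ``a limiting/approximation argument'' elsewhere. This is not true even pointwise on the submersion locus: for $(n,k)$-forms the eigenvalues of $f^*\sigma\,\Lambda_\omega$ are the sums $\sum_{j\in J}\lambda_j$ over $k$-element subsets $J$, and since $f^*\sigma$ has $n-\dim Y$ zero eigenvalues at a generic point, a subset $J$ lying in the fiber directions gives eigenvalue~$0$. Thus $f^*\sigma\,\Lambda_\omega$ is only semi-positive, and the na\"ive Bochner--Kodaira--Nakano estimate yields nothing against an arbitrary right-hand side. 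Relatedly, it is not enough to force $\{\dbar b_{i_0}\}$ to be $L^2$ for $h,\omega$: the paper observes in Remark~\ref{compare}(2) that the norm $\|\{\dbar b_{i_0}\}\|_\sigma$ one would need for Ohsawa's inductive method is actually \emph{infinite} when $q>0$. You correctly flag this degeneracy as ``the main obstacle'' at the end of your sketch, but you do not supply a mechanism to overcome it.

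The paper's mechanism is to exploit the special shape of the data. By the explicit partition-of-unity construction one has $\dbar b_{i_0}=\sum_j \dbar f^*\rho_j\wedge \dbar b_{ji_0}$, so each summand is wedged with the pull-back of a $(0,1)$-form from $Y$. One then introduces the strictly positive operator $B_\delta:=(\delta\omega+f^*\sigma)\Lambda_\omega$ and applies Cauchy--Schwarz in the form
\[
|\lla v,\{\dbar b_{i_0}\}\rra|^2\ \le\ \lla B_\delta v,v\rra\cdot\lla B_\delta^{-1}\{\dbar b_{i_0}\},\{\dbar b_{i_0}\}\rra.
\]
The first factor tends to $\lla f^*\sigma\,\Lambda_\omega v,v\rra\le\|\dbar v\|^2+\|\dbar^*v\|^2$ as $\delta\to0$. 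The crucial content is Lemma~\ref{comp}: because $\dbar f^*\rho$ lives precisely in the horizontal directions where $f^*\sigma$ is positive, one has the pointwise bound
\[
\big\langle B_\delta^{-1}(\dbar f^*\rho\wedge\varphi),\,\dbar f^*\rho\wedge\varphi\big\rangle_{h,\omega}\ \le\ D\,|\varphi|^2_{h,\omega}\,|\dbar\rho|^2_{\sigma}
\]
with $D$ independent of $\delta$, so the second factor stays bounded as $\delta\to0$ (after choosing $\chi$ to grow fast enough). This matching of the degeneracy of the curvature operator against the horizontal structure of the data is the missing idea in your proposal; the hand-waved approximation argument cannot substitute for it.
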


A K\"ahler form $\sigma$ on the regular locus $Y_{\rm{reg}}$ 
is said to be a K\"ahler form on $Y$ 
if for every point $y \in Y$ there exist a local embedding 
$i \colon U_{y} \hookrightarrow \mathbb{C}^{m}$ of 
an open neighborhood $ U_{y}$ of $y$
and a K\"ahler form $\widetilde{\sigma}$ on a neighborhood of $i( U_{y})$ 
such that $\sigma=i ^{*}\widetilde{\sigma}$ on $U_{y} \cap Y_{\rm{reg}}$. 
The pull-back $f^{*}\sigma$ is the extension of $f^{*}\sigma$ on $f^{-1}(Y_{\rm{reg}})$ 
to $X$, which is a semi-positive $(1,1)$-form on $X$. 
We show that the natural map $\varphi_{p, q}$ is the zero map 
under the assumption on the curvature of $E$. 
Then we obtain a vanishing theorem 
for the higher cohomology groups of $R^q f_{*}(K_{X}\otimes E)$ by Theorem \ref{m1}.

In the same way as in Section \ref{3}, 
we fix a Stein open cover $\mathcal{U}:=\{U_{i}\}_{i\in I}$ of $Y$, 
and compute the $\rm{\check{C}}$ech cohomology group. 
It is easy to prove the following lemma, 
which gives an explicit form of $\C{b}{p-k}$ satisfying $(*)$. 
The explicit form plays an important role 
when we solve the $\dbar$-equation. 
From now on, we fix a partition of unity $\{\rho_{i}\}_{i \in I}$
associated to $\mathcal{U}$, and put $\R_{i}:=f^{*}\rho_{i}$. 

\begin{lemm}\label{par}
Let $\CC{x}{\ell}$ be an $\ell$-cocycle valued in 
the $E$-valued $(n,q)$-forms. 
Then $\C{y}{\ell-1}:=\sum_{j \in I}\R_{j}\, x_{ji_{0}...i_{\ell-1}}$ 
satisfies 
$$
\CC{x}{\ell}=\delta (\CC{y}{\ell-1}). 
$$
\end{lemm}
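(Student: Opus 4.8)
The plan is to verify directly that $\delta(\CC{y}{\ell-1}) = \CC{x}{\ell}$ by unwinding the definition of the coboundary operator and using the cocycle condition on $\CC{x}{\ell}$ together with the fact that $\{\R_j\}_{j\in I}$ is a partition of unity (subordinate to the cover $\mathcal{U}$ pulled back to $X$). This is the standard proof that \v{C}ech cohomology of a fine sheaf vanishes, adapted to the sheaves of $E$-valued $(n,q)$-forms on the $\V{\bullet}$; the only point requiring a little care is the index bookkeeping and the sign conventions in $\delta$.

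First I would write out
$$
\delta(\CC{y}{\ell-1})_{i_0\dots i_\ell}
= \sum_{k=0}^{\ell} (-1)^k\, y_{i_0\dots\hat{i_k}\dots i_\ell}
= \sum_{k=0}^{\ell} (-1)^k \sum_{j\in I} \R_j\, x_{j i_0\dots\hat{i_k}\dots i_\ell},
$$
where all forms are understood to be restricted to $\V{\ell}$. Next I would apply the cocycle relation $\delta(\CC{x}{\ell})=0$, which in the form most convenient here reads
$$
x_{i_0\dots i_\ell} = \sum_{k=0}^{\ell} (-1)^k\, x_{j i_0\dots\hat{i_k}\dots i_\ell}
$$
after moving the $j$-indexed term (the $k=-1$ term, with the obvious sign) to the left-hand side; this is the identity obtained from $\delta(\CC{x}{\ell})_{j i_0\dots i_\ell}=0$. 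Substituting this into the previous display and pulling the sum over $j$ outside gives
$$
\delta(\CC{y}{\ell-1})_{i_0\dots i_\ell}
= \sum_{j\in I} \R_j\, x_{i_0\dots i_\ell}
= \Bigl(\sum_{j\in I}\R_j\Bigr)\, x_{i_0\dots i_\ell}
= x_{i_0\dots i_\ell},
$$
since $\sum_{j}\R_j = f^*(\sum_j \rho_j) = f^*1 = 1$ on $X$. One should also note that $\R_j\, x_{j i_0\dots i_{\ell-1}}$, a priori defined only on $\V{\ell}\cap \operatorname{supp}\R_j \subset \V{\ell}\cap f^{-1}(U_j)$, extends by zero to a smooth $E$-valued $(n,q)$-form on all of $\V{\ell-1}$ because $\operatorname{supp}\rho_j$ is contained in $U_j$; so the sum defining $\C{y}{\ell-1}$ makes sense as a form on $\V{\ell-1}$.

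I do not expect any genuine obstacle here — the statement is elementary and the proof is a one-line computation once the conventions are pinned down. The only thing to be vigilant about is consistency of the sign convention for $\delta$ with the one fixed in Section \ref{3} (so that the "$(*)$" equations there, which are solved using exactly this lemma, come out with the right signs), and the implicit restriction maps, which the paper has already announced it will suppress from the notation.
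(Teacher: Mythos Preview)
Your proof is correct and is exactly the standard computation the paper has in mind; indeed, the paper itself omits the proof, stating only that ``the proof is a straightforward computation, and thus we omit it.'' Your derivation of the cocycle identity $x_{i_0\dots i_\ell} = \sum_{k=0}^{\ell} (-1)^k x_{j i_0\dots\hat{i_k}\dots i_\ell}$ from $\delta(\CC{x}{\ell})_{ji_0\dots i_\ell}=0$, followed by the use of $\sum_j \R_j = 1$, is precisely the expected argument, and your remark about the extension by zero of $\R_j\, x_{ji_0\dots i_{\ell-1}}$ is the only subtlety worth recording.
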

The proof is a straightforward computation, and thus we omit it. 
Assume that $p>0$. 
For an arbitrary class in $\check{H}^{p}(\mathcal{U}, R^{q}f_{*}(K_{X}\otimes E))$, 
we take a $p$-cocycle $\CC{\A}{p} $ representing the class. 
By the above lemma,   
the $E$-valued form $\C{b}{p-k}$ defined inductively by 
$$
\C{b}{p-1}:=\sum_{j \in I}\R_{j}\, \mathcal{H}(\A_{ji_{0}...i_{p-1}}) 
\quad \text{and} \quad 
\C{b}{p-k}:=\sum_{j \in I}\R_{j}\, \dbar b_{ji_{0}...i_{p-k}}
$$
satisfies $(*)$. 
In particular, we have  
$$
\dbar b_{i_{0}}=\dbar \Big( \sum_{j \in I}\R_{j}\, \dbar b_{ji_{0}}\Big)
= \sum_{j \in I} \dbar \R_{j}\, \wedge \dbar b_{ji_{0}}
\quad \text{on }  U_{i_{0}}. 
$$
From the above equality, we will prove that 
the $E$-valued $(n,p+q)$-form 
$\varphi_{p,q}(\CC{\A}{p})=\{\dbar b_{i_{0}}\}$ is $\dbar$-exact on $X$, 
namely there exists an $E$-valued $(n,p+q-1)$-form $\eta$ on $X$ 
such that $\{\dbar b_{i_{0}}\} = \dbar \eta $. 
It implies that the map $\varphi_{p,q}$ is the zero map.

Fix a complete K\"ahler form $\omega$ on $X$. 
Note that $X$ admits a complete K\"ahler form 
since $X$ is a weakly pseudoconvex K\"ahler manifold. 
We take a metric $g$ on $E$ 
such that $\sqrt{-1}\Theta_{g}(E) \geq  f^{*}\sigma \otimes {\rm{id}}_{E}$ holds 
for some K\"ahler form $\sigma$ in the sense of Nakano. 
Further we take an exhaustive smooth plurisubharmonic function 
$\Phi$ on $X$. 
To solve the $\dbar$-equation $\{\dbar b_{i_{0}}\} = \dbar \eta $ 
by the standard technique (cf. \cite{Dem82}, \cite{Ohs84}), 
we consider the new metric $h$ on $E$ defined by 
$$
h:= g e^{- 2 \chi \circ \Phi}. 
$$
Here $\chi \colon \mathbb{R} \to \mathbb{R}$ 
is an increasing convex function, 
which will be suitably chosen later. 
The composite function $\chi \circ \Phi$ 
is also plurisubharmonic, 
and thus we have 
$$
\sqrt{-1}\Theta_{h}(E) =
\sqrt{-1}\Theta_{g}(E) + (2\deldel \chi \circ \Phi)\otimes 
{\rm{Id}}_{E}
\geq_{\rm{Nak}}   f^{*}\sigma \otimes {\rm{id}}_{E}. 
$$ 
From now on, we mainly handle the norm 
with respect to 
$h$ (not $g$ ) and $\omega$. 
(We often omit the subscript.)
We consider
the linear functional 
\begin{align*}
{\rm{Im}}\, \dbar^{*} \longrightarrow \mathbb{C} 
\quad \text{defined by} 
\quad w=\dbar^{*}v \longmapsto \lla v, \{\dbar b_{i_{0}}\} \rra_{h, \omega}, 
\end{align*}
where ${\rm{Im}}\, \dbar^{*}$ is 
the range of the closed operator $\dbar^{*}$ 
in the $L^{2}$-space $L_{(2)}^{n, p+q-1}(X, E)_{h, \omega}$. 
For the proof, it is sufficient to prove that 
the above linear map is (well-defined and) bounded, 
that is, there exists a positive constant $C$ such that 
$$
| \lla v, \{\dbar b_{i_{0}}\} \rra |^{2} \leq C \|\dbar^{*} v \|^{2} \quad 
\text{ for any } v \in {\rm{Dom}}\, \dbar^{*}.  
$$ 
Indeed, we can obtain 
an $E$-valued $(n,p+q-1)$-form $\eta$ on $X$ 
such that $\lla v, \{\dbar b_{i_{0}}\}\rra=\lla \dbar^{*} v, \eta  \rra$ 
for any $v \in {\rm{Dom}} \, \dbar^{*}$ and $\|\eta \|^2 \leq C$, 
by the Hahn-Banach theorem and the Riesz representation theorem. 
It gives a solution 
of the $\dbar$-equation $\{\dbar b_{i_{0}}\}=\dbar \eta $.

It is sufficient for the above estimate 
to prove that there exists a positive constant $C>0$ such that 
$$
|\lla v, \{\dbar b_{i_{0}}\}\rra|^{2} \leq 
C (\| \dbar v \|^{2} + \| \dbar^{*} v \|^{2})
$$
for compactly supported smooth $v$. 
From this inequality, 
we know that the above inequality also holds 
for arbitrary $v \in  {\rm{Dom}} \, \dbar \cap {\rm{Dom}} \, \dbar^{*}$.   
This is because, for a given $v \in  {\rm{Dom}} \, \dbar \cap {\rm{Dom}} \, \dbar^{*}$, 
we can take a compactly supported smooth $v_{k}$ 
such that $v_{k} \to v$, $\dbar v_{k} \to \dbar v$, and $\dbar^{*} v_{k} \to \dbar^{*} v$ 
in the $L^{2}$-space $L_{(2)}^{n, \bullet}(X, E)_{h, \omega}$ 
by the density lemma 
(for example, see 
\cite[(3.2)\,Theorem, ChapterV\hspace{-.1em}I\hspace{-.1em}I\hspace{-.1em}I]{Dem-book}). 
Here we used the condition that $\omega$ is complete. 
Then, for any $v \in {\rm{Dom}}\, \dbar^{*} $, we can easily see that 
$$
|\lla v, \{\dbar b_{i_{0}}\}\rra|^{2} = 
|\lla v_{1}, \{\dbar b_{i_{0}}\}\rra|^{2}
\leq 
C \| \dbar^{*} v_{1} \|^{2}
\leq 
C \| \dbar^{*} v \|^{2}
$$
by the orthogonal decomposition   
$$
v=v_{1}+v_{2}\in {\rm{Ker}}\, \dbar \, \oplus 
({\rm{Ker}}\, \dbar)^\perp. 
$$

To prove the above inequality for compactly supported smooth $v$, we consider 
the operator $B_{\delta}:=\omega_{\delta} \Lambda_{\omega} $ 
acting on the $L^{2}$-space $L_{(2)}^{n, \bullet}(X, E)_{h, \omega}$, 
where $\omega_{\delta}$ is the K\"ahler form on $X$ defined by 
$\omega_{\delta}=\delta \omega + f^{*}\sigma$ and 
$\Lambda_{\omega}$ is the (point-wise) adjoint operator of 
the wedge product $\omega \wedge \bullet$ with respect to 
$\langle \bullet, \bullet \rangle_{h,\omega}$. 
The operator $B_{\delta}$ is positive 
definite (for example see 
\cite[(5.8)\,Proposition, Chapter V\hspace{-.1em}I]{Dem-book}).  
In particular, the operator $B_{\delta}$ has the inverse operator $B_{\delta}^{-1}$. 
By the Cauchy-Schwarz inequality, 
we have 
$$
|\lla v, \{\dbar b_{i_{0}}\}\rra|^{2} \leq 
\lla B_{\delta}^{-1}\, \{\dbar b_{i_{0}}\}, \{\dbar b_{i_{0}}\}\rra 
\ \lla B_{\delta}\, v, v \rra. 
$$

We first consider the limit of $\lla B_{\delta}\, v, v \rra$ 
when $\delta$ goes to zero. 
By Bochner-Kodaira-Nakano identity and the assumption of 
$\sqrt{-1}\Theta_{h}(E) \geq_{\rm{Nak}} 
f^{*}\sigma \otimes {\rm{id}}_{E}$, 
we have 
\begin{align*}
\lim_{\delta \to 0}\lla B_{\delta}\, v, v \rra
&=\lla f^{*}\sigma \Lambda_{\omega}\, v, v \rra \\
&\leq \lla \sqrt{-1}\Theta_{h}(E) \Lambda_{\omega}\, v, v \rra \\
&\leq \| \dbar v \|^{2} + \| \dbar^{*} v \|^{2}. 
\end{align*}
Now we prove that the norm 
$\lla B_{\delta}^{-1}\, \{\dbar b_{i_{0}}\}, 
\{\dbar b_{i_{0}}\}\rra_{X, h, \omega}$ on $X$
is uniformly bounded with respect to $\delta>0$  
if we suitably choose an increasing convex function $\chi$. 
For an arbitrary relatively compact set $K \Subset X$, 
we consider the norm $\lla B_{\delta}^{-1}\, \{\dbar b_{i_{0}}\}, 
\{\dbar b_{i_{0}}\}\rra_{K, h, \omega}$ on $K$. 
We can obtain  
\begin{align*}
\lla B_{\delta}^{-1}\, \{\dbar b_{i_{0}}\}, 
\{\dbar b_{i_{0}}\}\rra_{K, h, \omega}
&\leq 
\sum_{\substack{ i_{0} \in I   {\text{ with }} \\
U_{i_{0}} \cap K \not = \phi}}
\lla B_{\delta}^{-1}\, \dbar b_{i_{0}}, 
\dbar b_{i_{0}}\rra_{B_{i_{0}}, h, \omega}\\
&\leq \sum_{\substack{ i_{0} \in I   {\text{ with }} \\
U_{i_{0}} \cap K \not = \phi}}
\sum_{\substack{ j \in I   {\text{ with }} \\
U_{j} \cap U_{i_{0}} \not = \phi}}
\lla B_{\delta}^{-1}\, \dbar f^{*}\rho_{j} \wedge \dbar b_{ji_{0}}, 
\dbar f^{*}\rho_{j} \wedge \dbar b_{ji_{0}} \rra_{B_{i_{0}}, h, \omega}\\
&\leq D \sum_{\substack{ i_{0} \in I   {\text{ with }} \\
U_{i_{0}} \cap K \not = \phi}}
\sum_{\substack{ j \in I   {\text{ with }} \\
U_{j} \cap U_{i_{0}} \not = \phi}} 
\int_{B_{i_{0}}} |b_{ji_{0}}|^{2}_{h, \omega}\, |\dbar \rho_{j}|^{2}_{\sigma}
\,dV_{\omega}
\end{align*}
from Lemma \ref{par} and \ref{comp}. 
The proof of Lemma \ref{comp} is given at the end of this section. 
The right hand side does not depend on $\delta$ 
since the constant $D$ in Lemma \ref{comp} is independent of $\delta$. 
Moreover we may assume that 
it becomes finite if we choose rapidly increasing function $\chi$.  
Therefore 
we obtain 
$$
|\lla v, \{\dbar b_{i_{0}}\}\rra|^{2} \leq 
C (\| \dbar v \|^{2} + \| \dbar^{*} v \|^{2})
$$
for compactly supported smooth $v$.  
This completes the proof.

It remains to prove the following lemma\,: 
\begin{lemm}\label{comp}
Let $\varphi$ be an $E$-valued $(n,k)$-form on $X$ and 
$\rho$ is a smooth function on $Y$. 
Then there exists a positive constant $D$ 
$($depending only on $k$, ${\rm{rank}}\, E$$)$ 
such that 
$$
\big\langle B_{\delta}^{-1} (\dbar f^{*}\rho \wedge \varphi), 
(\dbar f^{*}\rho \wedge \varphi) \big\rangle_{h, \omega}
\leq D\, |\varphi|^{2}_{h, \omega}\, |\dbar \rho|^{2}_{\sigma}. 
$$
\end{lemm}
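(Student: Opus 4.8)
The plan is to reduce the estimate to a pointwise linear-algebra statement and then exploit the fact that the ``dangerous'' direction $\dbar f^{*}\rho$ lies in the image of $f^{*}$. First I would work at an arbitrary point $x \in X$ and choose normal coordinates simultaneously diagonalizing $\omega$ and $\omega_{\delta} = \delta \omega + f^{*}\sigma$ in the usual way: there is an $\omega$-orthonormal coframe $dz^{1}, \dots, dz^{n}$ such that $\omega = \sqrt{-1}\sum_{j} dz^{j}\wedge d\bar z^{j}$ and $\omega_{\delta} = \sqrt{-1}\sum_{j} (\delta + \sigma_{j})\, dz^{j}\wedge d\bar z^{j}$, where $\sigma_{1}, \dots, \sigma_{n} \geq 0$ are the eigenvalues of $f^{*}\sigma$ with respect to $\omega$. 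Writing an $E$-valued $(n,k)$-form in this frame as $\psi = \sum_{|J|=k} dz^{1}\wedge\cdots\wedge dz^{n}\wedge d\bar z^{J}\otimes \psi_{J}$, the operator $B_{\delta} = \omega_{\delta}\Lambda_{\omega}$ acts diagonally by the scalar $\sum_{j \in J}(\delta + \sigma_{j})$ on the component $\psi_{J}$, so that $\langle B_{\delta}^{-1}\psi, \psi\rangle_{h,\omega} = \sum_{J} \big(\sum_{j\in J}(\delta+\sigma_{j})\big)^{-1} |\psi_{J}|^{2}_{h}$, with the convention that a component with $\sum_{j\in J}(\delta+\sigma_{j}) = 0$ forces $\psi_{J}=0$ when $\psi \in {\rm Im}\, B_{\delta}$ (this is exactly why $B_{\delta}^{-1}$ is applied only to forms in its image, and $\dbar f^{*}\rho \wedge \varphi$ is such a form).

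Next I would analyze $\dbar f^{*}\rho \wedge \varphi$ in this frame. Since $\dbar f^{*}\rho = f^{*}(\dbar \rho)$, its components point only in the ``$f$-horizontal'' directions; concretely, writing $\dbar f^{*}\rho = \sum_{j} a_{j}\, d\bar z^{j}$, one has $a_{j} = 0$ whenever $\sigma_{j}=0$, because a one-form pulled back from $Y$ pairs trivially with the kernel of $df$, which (at a generic point, and by semicontinuity in general) is spanned by the $\partial/\partial z^{j}$ with $\sigma_{j}=0$; moreover $\sum_{j}|a_{j}|^{2}/(\delta+\sigma_{j}) \leq \sum_{j : \sigma_j > 0}|a_{j}|^{2}/\sigma_{j} = |\dbar\rho|^{2}_{\sigma}$ by the very definition of the norm $|\cdot|_{\sigma}$ induced by $\sigma$. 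Then for each multi-index $J$ appearing in $\dbar f^{*}\rho \wedge \varphi$, we have $J = \{j\}\cup J'$ with $a_{j}\neq 0$, hence $\sigma_{j}>0$, hence $\sum_{k\in J}(\delta+\sigma_{k}) \geq \delta + \sigma_{j} \geq \sigma_{j}$. Expanding $|\dbar f^{*}\rho \wedge \varphi|^{2}$ componentwise and using this lower bound term by term gives, after collecting the combinatorial multiplicities (each bounded by a constant depending only on $k$ and ${\rm rank}\, E$), the pointwise bound $\langle B_{\delta}^{-1}(\dbar f^{*}\rho\wedge\varphi), \dbar f^{*}\rho\wedge\varphi\rangle_{h,\omega} \leq D\, |\varphi|^{2}_{h,\omega}\big(\sum_{j}|a_{j}|^{2}/\sigma_{j}\big) = D\,|\varphi|^{2}_{h,\omega}\,|\dbar\rho|^{2}_{\sigma}$.

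I expect the main obstacle to be making the ``horizontal'' vanishing $a_{j}=0$ for $\sigma_{j}=0$ rigorous and uniform, particularly over the locus where $f$ is not a submersion or over $f^{-1}(Y_{\rm sing})$, where $f^{*}\sigma$ degenerates and the simultaneous diagonalization must be handled by a limiting or semicontinuity argument; on the complement of a proper analytic subset this is transparent, and since both sides of the claimed inequality are continuous in $x$ (with the right-hand side finite) the estimate extends by density, but this point should be stated carefully. The rest is the routine Cauchy–Schwarz / combinatorial bookkeeping described above, which I would not spell out in full.
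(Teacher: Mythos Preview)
Your approach is essentially the paper's: work pointwise, simultaneously diagonalize $\omega$ and $f^{*}\sigma$, express $B_{\delta}$ diagonally with eigenvalue $\sum_{j\in J}(\delta+\lambda_{j})$ on the $J$-component, expand $\dbar f^{*}\rho\wedge\varphi$ and bound the combinatorics by a constant depending only on $k$ and $\mathrm{rank}\,E$. The substantive difference is in how you relate the coefficients $a_{j}$ of $\dbar f^{*}\rho$ to $|\dbar\rho|^{2}_{\sigma}$.

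Your assertion that $\sum_{j:\lambda_{j}>0}|a_{j}|^{2}/\lambda_{j}=|\dbar\rho|^{2}_{\sigma}$ ``by the very definition'' is not correct as stated: $|\dbar\rho|^{2}_{\sigma}$ is computed on $Y$, not on $X$, and in general one only has $\leq$ (with equality only when $df_{x}$ surjects onto $T_{f(x)}Y$). More importantly, the obstacle you anticipate --- making ``$a_{j}=0$ whenever $\lambda_{j}=0$'' rigorous over the non-submersion locus and over $f^{-1}(Y_{\mathrm{sing}})$ --- is avoided entirely in the paper. The paper embeds a neighborhood of $f(x)$ into $\mathbb{C}^{m}$, chooses $t$-coordinates with $\sigma$ standard at $y=f(x)$, writes $a_{k}=\sum_{p}\frac{\partial\rho}{\partial\bar t_{p}}\,\overline{\partial f_{p}/\partial z_{k}}$, and applies Cauchy--Schwarz in the $t$-variables to get the pointwise inequality
\[
|a_{k}|^{2}\ \leq\ |\dbar\rho|^{2}_{\sigma}\cdot\sum_{p}\Big|\frac{\partial f_{p}}{\partial z_{k}}\Big|^{2}\ =\ |\dbar\rho|^{2}_{\sigma}\cdot\lambda_{k},
\]
valid at \emph{every} point of $X$; when $\lambda_{k}=0$ this already gives $a_{k}=0$, and no density or semicontinuity argument is needed. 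Feeding $|a_{k}|^{2}\leq\lambda_{k}\,|\dbar\rho|^{2}_{\sigma}$ into your expansion yields the same final ratio $\big(\sum_{k\in J}\lambda_{k}\big)\big/\big(\sum_{j\in J}(\delta+\lambda_{j})\big)\leq 1$ that the paper uses. So your outline is right, but replace the ``by definition'' step and the proposed limiting argument by this one-line Cauchy--Schwarz bound in target coordinates.
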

\begin{proof}
For a given point $x \in X$, 
we choose a local coordinate $z:=(z_{1},z_{2},\dots, z_{n})$ centered at $x$ 
such that 
\begin{align*}
\omega = \frac{\sqrt{-1}}{2} \sum_{j=1}^{n} 
 dz_{j} \wedge d\overline{z_{j}} \quad \text{and} \quad 
f^{*}\sigma = \frac{\sqrt{-1}}{2} \sum_{j=1}^{n} 
\lambda_{j} dz_{j} \wedge d\overline{z_{j}}
\quad  {\rm{ at}}\ x. 
\end{align*}
By taking a local embedding $i \colon U_{y} \hookrightarrow \mathbb{C}^{m}$
of a neighborhood $U_{y}$ of $y:=f(x) \in Y$,  
we can assume that $Y$ is an open set in $\mathbb{C}^{m}$
since $\rho $ (resp. $\sigma$) can be seen as the pull-back 
of a smooth function (resp. a K\"ahler form) on $\mathbb{C}^{m}$. 
We choose a local coordinate $t:=(t_{1},t_{2},\dots, t_{m})$ 
centered at $y$ such that 
\begin{align*}
\sigma = \frac{\sqrt{-1}}{2} \sum_{j=1}^{m} 
 dt_{j} \wedge d\overline{t_{j}} 
\quad  {\rm{ at}}\ y=f(x). 
\end{align*}
When we write $f=(f_{1}, f_{2}, \dots, f_{m})$ with respect to these coordinates, 
we have 
$$
\sum_{p=1}^{m} \frac{\partial f_{p}}{\partial z_{k}}
\overline{\frac{\partial f_{p}}{\partial z_{\ell}}}= \delta_{k \ell} \lambda_{k}
\quad  {\rm{ at}}\ x. 
$$
For the local expressions with respect to these coordinates 
$$
\varphi=\sum_{I} \sum_{i=1}^{{\rm{rank}\,E}} \varphi_{I,i}(z)\, 
e_{i}\otimes dz \wedge d\overline{z_{I}} 
\quad \text{and} \quad 
\dbar f^{*}\rho= \sum_{k=1}^{n} \sum_{j=1}^{m}
\frac{\partial \rho}{\partial \overline{t}_{j}}(f(z))
\overline{\frac{\partial f_{j}}{\partial z_{k}}}(z) 
d\overline{z}_{k}, 
$$
we can easily see that 
$$
|\varphi|^{2}_{\omega}=\sum_{I} \sum_{i=1}^{{\rm{rank}\,E}} |\varphi_{I,i}|^{2} 
\ \ \text{ at } x
\quad \text{and} \quad 
|\dbar \rho|^{2}_{\sigma}=\sum_{j=1}^{m} 
\big| \frac{\partial \rho}{\partial \overline{t}_{j}}(f(0)) \big|^{2} 
\quad \text{ at } y. 
$$
Here $dz$ is $dz:=dz_{1}\wedge dz_{2}\cdots\wedge dz_{n}$, 
$d\overline{z_{I}}$ is
$d\overline{z_{I}}:=d\overline{z_{i_{1}}}\wedge d\overline{z_{i_{2}}}\cdots\wedge 
d\overline{z_{i_{k}}}$ 
for an ordered multi-index $I=\{i_{1}<i_{2}<\dots<i_{k}\}$, and  
$e_{i}$ is a local frame of $E$ that gives an orthonormal basis 
at $x$.
Then, by putting
$$
g_{k}(z):=
\sum_{j=1}^{m}\frac{\partial \rho}{\partial \overline{t}_{j}}(f(z))
\overline{\frac{\partial f_{j}}{\partial z_{k}}}(z), 
$$
we have 
\begin{align*}
|g_{k}(0)|^{2} 
&\leq \, |\dbar \rho|^{2}_{\sigma}\, \Big( \sum_{p=1}^{m}\big|
\frac{\partial f_{p}}{\partial z_{k}}(0)\big|^{2} \Big)
\end{align*}
by the Cauchy-Schwarz inequality. 
Let $\mu_{j}$ be an eigenvalue of $\omega_{\delta}$ 
with respect to $\omega$. 
We remark that $\mu_{j}=\delta + \lambda_{j}$ at $x$. 
Then, by straightforward computations, 
we obtain   
\begin{align*}
&\big\langle B_{\delta}^{-1} (\dbar f^{*}\rho \wedge \varphi), 
(\dbar f^{*}\rho \wedge \varphi) \big\rangle_{h, \omega} \\
=&  \sum_{J}\sum_{i=1}^{{\rm{rank}}\, E}
\Big( \sum_{{\text{$I\cup \{k\}=J$}}} 
{\rm{sgn}}
\begin{pmatrix}
J
\\
Ik
\end{pmatrix}
\varphi_{I,i}\, 
g_{k}(0)\Big)
\Big( \sum_{{\text{$I'\cup \{k'\}=J$}}} 
{\rm{sgn}}
\begin{pmatrix}
J
\\
I'k'
\end{pmatrix}
\varphi_{I',i}\, 
g_{k'}(0) \Big)
\Big( {\sum_{j \in J} \mu_{j}} \Big)^{-1}\\
\leq& 
C_{1} \sum_{J}\sum_{i=1}^{{\rm{rank}}\, E} 
\sum_{{\text{$I\cup \{k\}=J$}}}
|\varphi_{I,i}|^{2}\, |g_{k}(0)|^{2}\, 
\Big( {\sum_{j \in J} \mu_{j}} \Big)^{-1}\\
\leq& 
C_{2}\, |\varphi|^{2}_{\omega}\, 
|\dbar \rho|^{2}_{\sigma}
\sum_{J}
\frac{\sum_{k \in J}\sum_{p=1}^{m}\big|
\frac{\partial f_{p}}{\partial z_{k}}(0)\big|^{2}}
{\sum_{j \in J}(\delta + \sum_{p=1}^{m} |\frac{\partial f_{p}}{\partial z_{j}}(0)|^{2})}
\end{align*} 
for some constants $C_{1}, C_{2}>0$.  
The first inequality follows from the fundamental inequality 
$(\sum_{i=1}^{N}|a_{i}|)^{2} \leq 2^{N-1} 
\sum_{i=1}^{N}|a_{i}|^{2}$ and the second inequality 
follows from the  above estimate for $g_{k}(0)$. 
The last term is smaller than or equal to one for any $\delta>0$. 
This completes the proof. 
\end{proof}

\begin{rem}\label{compare}
(1) 
In his paper \cite{Ohs84}, 
Ohsawa proved Theorem \ref{m2} in the case $q=0$, 
whose strategy is as follows: 
He showed that, for a $\dbar$-closed $E$-valued $(n,k)$-form $f$ on 
a complete K\"ahler manifold, 
there exists a solution $g$ of the $\dbar$-equation 
$\dbar g = f$ satisfying $\| g \|_{\sigma}\leq C \|f\|_{\sigma}$ 
for some positive constant $C$  
if $\|f\|_{\sigma}< \infty$. 
Here $\| \bullet \|_{\sigma}$ denotes the $L^{2}$-norm define by 
$\| \bullet \|_{\sigma} :=\lim_{\delta \to 0} \|\bullet\|_{\delta \omega + \sigma}$. 
(See \cite[Theorem 2.8]{Ohs84} for the precise statement.)
From this celebrated result, he obtained Theorem \ref{m2} 
by inductively constructing solutions of the $\dbar$-equation. 
\vspace{0.1cm}\\
(2) 
However, we can not expect $\|\dbar b_{i_{0}}\|_{\sigma} < \infty$ 
in the case $q>0$. 
In fact, we can prove that $\|\dbar b_{i_{0}}\|_{\sigma} =\infty$ in this case,  
by using \cite[Theorem 5.2]{Tak95}. 
For this reason, we estimate the limit of $ \lla B^{-1}_{\delta}\, 
\{ \dbar b_{i_{0}} \}, \{ \dbar b_{i_{0}} \} \rra$ 
instead of $\|\dbar b_{i_{0}}\|_{\sigma}$. 
\vspace{0.1cm}\\
(3) 
By the same computation as the proof of Lemma \ref{comp}, 
we can check that 
$\lim_{\delta \to 0}\lla B^{-(p+q)}_{\delta}\, \{ \dbar b_{i_{0}} \}, 
\{ \dbar b_{i_{0}} \}\rra$ is finite. 
If we can construct a solution $g$ 
of the $\dbar$-equation $\dbar g = f$ 
such that $\lim_{\delta \to 0} \lla B^{-(k-1)}_{\delta}\, g, g \rra < \infty$
from the assumption of 
$\lim_{\delta \to 0}\lla B^{-k}_{\delta}\, f, f \rra < \infty$,  
we can prove Theorem \ref{m2} by the same method as in \cite{Ohs84}. 
However this strategy did not succeed, 
and thus we need the injective map $\varphi_{p,q}$ constructed in Theorem \ref{m1}. 
\end{rem}

%\if0

\newpage
%%%%%%%%%%%%%%%%%%%%%%

%\fi

\begin{thebibliography}{n}

\bibitem[Dem-book]{Dem-book}
J.-P. Demailly.   
\textit {Complex analytic and differential geometry.}
Lecture Notes on the web page of the author.


\bibitem[Dem82]{Dem82}
J.-P. Demailly.  
\textit{Estimations $L^{2}$ pour l'op\'erateur $\overline{\partial}$ d'un 
fibr\'e vectoriel holomorphe semi-positif au-dessus d'une vari\'et\'e k\"ahl\'erienne compl\`ete.}
Ann. Sci. \'Ecole Norm. Sup(4). {\bf{15}} (1982), no. 3, 457--511. 


\bibitem[Eno90]{Eno90}
I. Enoki. 
\textit{Kawamata-Viehweg vanishing theorem for compact K\"ahler manifolds.}
Einstein metrics and Yang-Mills connections (Sanda, 1990), 59--68. 


\bibitem[EV92]{EV92}
H. Esnault, E. Viehweg. 
\textit{Lectures on vanishing theorems.} 
DMV Seminar, {\bf{20}}. Birkh\"auser Verlag, Basel, (1992). 


\bibitem[Fn15]{Fuj15}
O. Fujino. 
\textit{On semipositivity, injectivity, and vanishing theorems.}
Preprint, a survey for Zucker 65, arXiv:1503.06503v3.  


\bibitem[Fs15]{Fs15}
T. Fujisawa. 
\textit{A remark on the decomposition theorem for direct images of canonical sheaves tensorized with semipositive vector bundles.}
Preprint, arXiv:1512.03887v1.  


\bibitem[Kol86a]{Kol86}
J. Koll\'ar. 
\textit{Higher direct images of dualizing sheaves. I.}
Ann. of Math. (2) {\bf{123}} (1986), no. 1, 11--42.


\bibitem[Kol86b]{Kol86b}
J. Koll\'ar. 
\textit{Higher direct images of dualizing sheaves. I\hspace{-.1em}I.}
Ann. of Math. (2) {\bf{124}} (1986), no. 1, 171--202. 


\bibitem[Mat15]{Mat15b}
S. Matsumura. 
\textit{Injectivity theorems with multiplier ideal sheaves and their applications.}
Complex Analysis and Geometry, 
{\bf{144}} of the series Springer Proceedings in Mathematics $\&$ Statistics, 241--255


\bibitem[Ohs84]{Ohs84}
T. Ohsawa. 
\textit{Vanishing theorems on complete K\"ahler manifolds.} 
Publ. Res. Inst. Math. Sci. {\bf{20}} (1984), no. 1, 21--38. 



\bibitem[Sko78]{Sko78}
H. Skoda. 
\textit{Morphismes surjectifs de fibr\'es vectoriels semi-positifs.} 
Ann. Sci. \'Ecole Norm. Sup. (4) {\bf{11}} (1978), no. 4, 577--611. 


\bibitem[Tak95]{Tak95}
K. Takegoshi. 
\textit{Higher direct images of canonical sheaves tensorized with semi-positive vector bundles by proper K\"ahler morphisms.} 
Math. Ann. {\bf{303}} (1995), no. 3, 389--416.






\end{thebibliography}
\end{document}